\newtheorem{theorem}{Theorem}[section]
\newtheorem{lemma}[theorem]{Lemma}
\newtheorem{proposition}[theorem]{Proposition}
\newtheorem{corollary}[theorem]{Corollary}
\newtheorem{conjecture}[theorem]{Conjecture}
\theoremstyle{definition}
\newtheorem*{ack}{Acknowledgements}
\newtheorem*{con}{Conventions}
\newtheorem{remark}[theorem]{Remark}
\newtheorem{example}[theorem]{Example}
\newtheorem{definition}[theorem]{Definition}
\numberwithin{equation}{section} \numberwithin{figure}{section}
 \DeclareMathOperator{\NS}{NS}
\DeclareMathOperator{\Aut}{Aut}
\DeclareMathOperator{\Spec}{Spec}
\DeclareMathOperator{\an}{an}
\DeclareMathOperator{\Hom}{Hom}
 \DeclareMathOperator{\End}{End}
\newcommand{\Qbar}{\overline{\QQ}}
\newcommand\ZZ{\mathbb{Z}}
\newcommand\QQ{\mathbb{Q}}
\newcommand\CC{\mathbb{C}}
\newcommand\OO{\mathcal{O}}
\title[Arithmetic hyperbolicity]{Arithmetic hyperbolicity:  automorphisms and persistence}
\author{Ariyan Javanpeykar}
\address{Ariyan Javanpeykar \\
Institut f\"{u}r Mathematik\\
Johannes Gutenberg-Universit\"{a}t Mainz\\
Staudingerweg 9, 55099 Mainz\\
Germany.}
\email{peykar@uni-mainz.de}
\subjclass[2010]
{14G99 
(11G35,  
14G05,  
32Q45)} 
\keywords{Integral points, hyperbolicity, automorphisms, hyperk\"ahler varieties, dynamical systems.}
\begin{document}

\begin{abstract}  We  show that if the automorphism group of a projective variety is torsion, then it is finite. Motivated by Lang's conjecture on  rational points of hyperbolic varieties, we use this to prove that a projective variety with only finitely many rational points has only finitely many automorphisms.  Moreover, 
 we investigate to what extent finiteness of $S$-integral points on a variety over a number field persists over finitely generated fields.  To this end, we introduce the class of mildly bounded varieties and prove a general criterion for proving this persistence. 
 \end{abstract}

\maketitle

\thispagestyle{empty}

\section{Introduction}

Let $X$ be a projective variety over a number field $K$. Suppose that $X(L)$ is finite for every number field $L/K$. Motivated by Lang's conjecture on rational points of hyperbolic varieties, we show that $X$ has only finitely many automorphisms (Theorem \ref{thm1}). In our work    with Junyi Xie, we build on this result and prove the stronger statement that a  quasi-projective variety $X$ over $K$ with $X(L)$ finite for every number field $L/K$ has, in fact, only finitely many birational self-maps; see \cite[Theorem~1.3]{JXie}. For a survey of  our work on self-maps of hyperbolic varieties, we refer the reader to \cite[\S 15]{JBook}.
  
We also investigate the Persistence Conjecture (see Conjecture \ref{conj:pers}) which, roughly speaking, says that the finiteness of $S$-integral points of a variety  over $\QQ$ forces the    finiteness   of points over all $\mathbb{Z}$-finitely-generated integral domains of characteristic zero.   We prove this conjecture under the additional assumption that the variety $X$ is ``mildly bounded'' (Definition \ref{defn:mb}); see Theorem \ref{thm:mb} for a precise statement. In the current paper, this result is used to prove the Persistence Conjecture for all Brody hyperbolic projective varieties; see Theorem \ref{thm:geometricity_intro}. 
  
However, Theorem \ref{thm:mb} is applied in four other situations elsewhere. Namely,  we use Theorem \ref{thm:mb} to prove the Persistence Conjecture for a variety $X$    which admits a quasi-finite period map \cite{JLitt}, or  is hyperbolically embeddable \cite{JLevin}, or admits a quasi-finite map to a semi-abelian variety \cite{vBJK}, or effectively parametrizes some polarized varieties with semi-ample canonical bundle \cite{JSZ}.  

 For a  survey of our work on the Persistence Conjecture and its relation to Lang's conjecture, we refer the reader to \cite[\S17]{JBook}.

\subsection{The Green--Griffiths--Lang conjecture}

A variety $X$ over $\CC$ is Brody hyperbolic if every holomorphic map $\CC\to X^{\an}$ is constant, where $X^{\an}$ is the complex-analytic space associated to $X$.  Conjecturally, the property of being Brody hyperbolic is captured by the finiteness of rational points. Let us be more precise.

  Let $k$ be an algebraically closed field of characteristic zero.
Following \cite[\S4]{JLalg}, we say that a finite type separated scheme $X$ over $k$ is \emph{arithmetically hyperbolic over $k$} if, for      all $\ZZ$-finitely generated subrings $A\subset k$ and all finite type separated schemes $\mathcal{X}$ over $A$  with $\mathcal{X}_k\cong X$, the set $\mathcal{X}(A)$ is finite.

The results of this paper are motivated by the conjectures of Green--Griffiths and Lang on hyperbolic varieties.

\begin{conjecture}[Consequence of conjectures of Green--Griffiths and Lang]\label{conj:lang}
Let $X$ be a proper integral variety over $k$. Then the following are equivalent.
\begin{enumerate}
\item  The projective variety $X$ is arithmetically hyperbolic over $k$.
\item Every integral closed subvariety of $X$ is of general type.
\item For every subfield $k_0\subset \mathbb{C}$, every embedding $k_0\to k$, and every variety $X_0$ over $k_0$ with $X\cong X_0\otimes_{k_0} k$, we have that $X_{0,\CC}$ is Brody hyperbolic.  
\end{enumerate}
\end{conjecture}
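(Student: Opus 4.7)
The plan is to decompose the three-way equivalence into its constituent implications and identify which pieces follow by formal or persistence-style arguments versus which pieces reduce to established open conjectures in the field.

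For $(1) \Rightarrow (2)$: I would first show arithmetic hyperbolicity passes to closed integral subvarieties. Indeed, given a closed integral subvariety $Y \subset X$ and a $\ZZ$-finitely generated $A \subset k$, spreading out $X$ to a model $\mathcal{X}/A$ and taking the scheme-theoretic image of $Y$ yields a model $\mathcal{Y}/A$ with $\mathcal{Y}(A) \hookrightarrow \mathcal{X}(A)$, so $Y$ is arithmetically hyperbolic. The nontrivial content is then the direction ``arithmetically hyperbolic projective $\Rightarrow$ general type'', which is exactly one half of Lang's arithmetic conjecture and has no known unconditional proof in general.

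For $(2) \Rightarrow (3)$: Fix $k_0 \subset \CC$ with an embedding $k_0 \hookrightarrow k$ and a model $X_0/k_0$. Since every integral closed subvariety of $X_{0,\CC}$ descends, after enlarging $k_0$ within $\CC$, to a closed subvariety of $X_0$, and since being of general type is preserved under base change between algebraically closed fields of characteristic zero, hypothesis (2) implies that every closed subvariety of $X_{0,\CC}$ is of general type. The desired Brody hyperbolicity is then precisely the Green--Griffiths--Lang conjecture over $\CC$. I expect this to be the single deepest input in the chain.

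For $(3) \Rightarrow (1)$: This is where the paper's machinery enters decisively. By Theorem \ref{thm:geometricity_intro}, the Persistence Conjecture (Conjecture \ref{conj:pers}) is known for all Brody hyperbolic projective varieties, which reduces arithmetic hyperbolicity over the possibly very large base $k$ to the finiteness of $L$-points for number fields $L/K$, where $K$ is a number field of definition obtained by spreading out. That number-field finiteness is Lang's original conjecture on rational points of hyperbolic varieties, so this implication is conditional only on Lang's conjecture over number fields, with all the transfer from $\QQ$ up to $k$ handled by the paper. The remaining direction $(3) \Rightarrow (2)$ is the Kobayashi--Lang conjecture that Brody hyperbolicity forces all subvarieties to be of general type; alternatively one may obtain it by composing $(3) \Rightarrow (1) \Rightarrow (2)$.

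The main obstacle is that each link depends on a genuinely open conjecture: Lang's arithmetic conjecture for $(1) \Leftrightarrow (2)$ and the Green--Griffiths--Lang conjecture for $(2) \Leftrightarrow (3)$. The contribution of this paper is not to resolve these, but to sever the arithmetic question from its dependence on the size of the base field: by proving persistence for Brody hyperbolic varieties, one may restrict attention to $L$-points over number fields, which is the setting in which Lang's conjecture is classically stated and where partial results (e.g., Faltings for subvarieties of abelian varieties) actually live.
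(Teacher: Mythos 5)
This statement is a \emph{conjecture}, not a theorem of the paper: the label ``Consequence of conjectures of Green--Griffiths and Lang'' and the surrounding discussion make plain that the paper offers no proof, only attributions (to Lang's original formulations in \cite{Lang2}) and a recollection of the known special cases due to Faltings. There is therefore no ``paper's own proof'' to compare against, and a blind proof attempt cannot succeed. Your write-up is an honest and largely accurate conditional road-map rather than a proof, and you say as much; that is the right stance, but it should be stated up front rather than left implicit behind the word ``proposal''.

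On the content of your decomposition: your handling of $(1)\Rightarrow(2)$ is correct that arithmetic hyperbolicity descends to integral closed subvarieties (this is even used in the paper, in the proof of Proposition~\ref{prop:ar_is_gr}), but the remaining implication ``arithmetically hyperbolic $\Rightarrow$ of general type'' is not the Bombieri--Lang conjecture; it is the contrapositive of the complementary (and also open) conjecture that non-general-type projective varieties have potentially dense rational points. Your $(2)\Rightarrow(3)$ correctly isolates the analytic Green--Griffiths--Lang conjecture over $\CC$, with the descent/base-change step handled as you describe. Your observation on $(3)\Rightarrow(1)$ is the most valuable part: Theorem~\ref{thm:geometricity_intro} (resting on Theorem~\ref{thm:mb}) does exactly the job of severing the dependence on the size of $k$, reducing the arithmetic hyperbolicity of $X$ over $k$ to that of a model over $\Qbar$ (equivalently, to finiteness of $L$-rational points over number fields $L$, since $X$ is projective), which is where Lang's conjecture on rational points of hyperbolic varieties lives. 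That is indeed the role the paper envisions for its persistence results. In summary: no gap in your reasoning given its conditional nature, but the deliverable is an analysis of the conjecture's logical structure, not a proof, and the attribution in the $(1)\Rightarrow(2)$ step should be corrected.
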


The original versions of this conjecture appeared in   \cite{Lang2}. 
In   \cite{FaltingsLang}, Faltings proved the above conjecture for projective curves and, more generally, closed subvarieties of abelian varieties. By Faltings's earlier work on the moduli space of abelian varieties \cite{FaltingsComplements}, Conjecture \ref{conj:lang} is also known to hold for projective varieties over $k$ with a  finite morphism to the moduli stack of principally polarized abelian varieties.

 \subsection{Automorphisms}
   Our first result concerns   the finiteness of the automorphism group of a projective arithmetically hyperbolic variety.  
   
 \begin{theorem}\label{thm1}  
 If $X$ is a projective arithmetically hyperbolic variety over $k$, then $\Aut_k(X)$ is finite.
 \end{theorem}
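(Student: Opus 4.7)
My plan is to split the proof into two steps. First, I would show that arithmetic hyperbolicity forces every automorphism of $X$ to be of finite order; second, I would combine this torsion property with standard structural facts about automorphism group schemes of projective varieties to conclude finiteness.

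For the first step, I would fix $\phi \in \Aut_k(X)$ and assume for contradiction that $\phi$ has infinite order. The fixed-point schemes $\operatorname{Fix}(\phi^n) \subset X$ are closed subschemes forming an ascending chain indexed by divisibility, so by noetherianity of $X$ the union $Y := \bigcup_n \operatorname{Fix}(\phi^n)$ stabilizes at some $\operatorname{Fix}(\phi^{N!})$. Since $X$ is reduced and no power of $\phi$ equals the identity, $Y$ is a proper closed subscheme, so Zariski density of $X(k)$ in $X$ yields a point $x_0 \in X(k)\setminus Y(k)$ whose orbit $\{\phi^n(x_0)\}_{n\in\ZZ}$ is infinite. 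I would then spread out: pick a $\ZZ$-finitely generated subring $A \subset k$ and a model $\mathcal{X}/A$ of $X$, and after enlarging $A$, extend $\phi$ to $\Phi \in \Aut_A(\mathcal{X})$ and $x_0$ to $\xi \in \mathcal{X}(A)$. The orbit $\{\Phi^n(\xi)\}_n \subset \mathcal{X}(A)$ is then infinite, contradicting that $X$ is arithmetically hyperbolic.

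For the second step, I would invoke three classical facts: (i) the identity component $\Aut^0_k(X)$ is a finite-type connected algebraic group over $k$; (ii) the kernel $\Aut^\tau_k(X)$ of the natural action $\rho\colon \Aut_k(X) \to \Aut(\NS(X))$ is a group scheme of finite type over $k$ with identity component $\Aut^0_k(X)$ (via the boundedness of graphs of bounded-degree automorphisms in the Hilbert scheme of $X \times X$); and (iii) over an algebraically closed field of characteristic zero, every connected algebraic group of positive dimension admits a $k$-point of infinite order (clear via one-parameter subgroups $\Ga$ or $\Gm$ in the linear part, and due to Frey--Jarden for the abelian part). Step~1 forces $\Aut_k(X)(k)$ to be torsion, so (iii) yields $\Aut^0_k(X) = 1$, and then (ii) makes $\Aut^\tau_k(X)(k)$ finite. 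On the other hand, the image of $\rho$ is a torsion subgroup of $\Aut(\NS(X))$; since $\NS(X)$ is finitely generated, this image lies in a group virtually isomorphic to $\GL_r(\ZZ)$, and Minkowski's theorem (torsion subgroups of $\GL_r(\ZZ)$ inject into $\GL_r(\FF_3)$) shows it is finite. Putting both pieces together, $\Aut_k(X)$ is finite.

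The main obstacle is Step 1, since Step 2 is essentially a structural wrap-up relying on well-documented classical inputs. In Step 1, the noetherian stabilization of the fixed-locus chain is the essential trick that converts ``$\phi$ has infinite order'' into the existence of a single $k$-point with infinite $\phi$-orbit, which is precisely what arithmetic hyperbolicity can contradict after spreading out.
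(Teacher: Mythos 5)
Your proof is correct, and Step~1 is a genuinely different---and more elementary---route than the paper's. The paper deduces that every $\phi\in\Aut_k(X)$ has finite order from the stronger Theorem~\ref{thm:ends}, which treats arbitrary dominant endomorphisms and for that purpose invokes Amerik's $p$-adic theorem (Theorem~\ref{thm:Amerik}, built on Bell--Ghioca--Tucker uniformisation and Hrushovski's work) together with Lemma~\ref{lem:periodic_is_aut}. Your argument instead exploits invertibility: for an \emph{automorphism} $\phi$ of infinite order, the ascending chain $\operatorname{Fix}(\phi^{1!})\subset\operatorname{Fix}(\phi^{2!})\subset\cdots$ of closed subschemes stabilizes by noetherianity at some $\operatorname{Fix}(\phi^{N!})$, which is a proper closed subset because $X$ is integral and $\phi^{N!}\neq\mathrm{id}$ (equality of the graph with the diagonal would force $\phi^{N!}=\mathrm{id}$). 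Any $k$-point off this locus then has infinite orbit, and spreading out contradicts arithmetic hyperbolicity. This cleanly replaces the dynamical machinery, at the cost of not recovering the endomorphism statement of Theorem~\ref{thm:ends} (the inference ``$\phi^n(x_0)=\phi^m(x_0)\Rightarrow\phi^{n-m}(x_0)=x_0$'' needs $\phi$ invertible). Your Step~2 is essentially the paper's Theorem~\ref{thm:tor_is_fin}: the same three ingredients appear (finiteness of torsion subgroups of $\GL_r(\ZZ)$ applied to the $\NS$-action, representability of numerically bounded automorphisms by a finite-type group scheme, and the char-zero fact that positive-dimensional algebraic groups have points of infinite order---with Frey--Jarden handling the abelian-variety part, just as in Lemma~\ref{lem:ft_tor_gr_is_fin}). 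The only cosmetic divergence is that the paper works with $\Aut_{X/k,[L]}$ (automorphisms fixing one ample class) rather than the full kernel $\Aut^\tau$ of the $\NS$-action; both yield the needed finite-type group scheme containing your finite-index subgroup.
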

 
 Note that a Brody hyperbolic projective variety has only finitely many automorphisms \cite[Theorem~5.4.4]{Kobayashi}, and that  a projective variety over $k$ of general type has only finitely many automorphisms \cite[\S11]{Iitaka}. Thus, Theorem \ref{thm1} is in accordance with the Green--Griffiths--Lang conjecture (Conjecture \ref{conj:lang}).

  We do not know whether Theorem \ref{thm1}  holds for non-proper (e.g., affine) arithmetically hyperbolic varieties, although it most likely does.      Instead of the finiteness, the next result   verifies the \emph{local finiteness}. Here, we follow standard terminology and say that 
a group $G$ is \emph{locally finite} if every finitely generated subgroup of $G$ is finite. 

\begin{theorem}[Amerik + Bass--Lubotzky]\label{thm:ends} Let $X$ be an   arithmetically hyperbolic variety over $k$. Then,  $\Aut_k(X)$ is a locally finite group and every dominant endomorphism of $X$ is an automorphism of finite order.
\end{theorem}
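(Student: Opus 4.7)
The statement has two parts: every dominant endomorphism of $X$ is an automorphism of finite order, and $\Aut_k(X)$ is locally finite. The plan is to prove the first by combining arithmetic hyperbolicity with Amerik's theorem on non-preperiodic algebraic points of a dominant self-map of infinite order; the second then follows by applying Bass--Lubotzky's linearity result for automorphism groups of finite-type schemes and invoking Schur's classical theorem that a finitely generated torsion linear group in characteristic zero is finite.

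\textbf{Endomorphisms.} Let $f \colon X \to X$ be a dominant endomorphism, and assume for contradiction that $f$ has infinite order. Amerik's theorem (applicable in our setting after the standard spread-out that replaces $k$ by a suitable finitely generated subfield, and then by a number subfield) furnishes an algebraic point $x \in X(\kbar)$ whose forward orbit $\{f^n(x) : n \geq 0\}$ is infinite. Choose a $\ZZ$-finitely generated subring $A \subset k$ over which $X$, $f$, and $x$ all have a common model $(\mathcal{X}, \mathcal{F}, \widetilde{x})$ with $\widetilde{x} \in \mathcal{X}(A)$; then the iterates $\{\mathcal{F}^n(\widetilde{x})\}_{n \geq 0}$ give infinitely many $A$-points of $\mathcal{X}$, contradicting the arithmetic hyperbolicity of $X$. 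Hence $f^N = \mathrm{id}_X$ for some $N \geq 1$, so $f$ is an automorphism of finite order.

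\textbf{Local finiteness.} By the previous step, every element of $\Aut_k(X)$ has finite order, so $\Aut_k(X)$ is a torsion group. Let $G \leq \Aut_k(X)$ be a finitely generated subgroup. Spreading the action of a finite set of generators out to a model $\mathcal{X}$ of $X$ over a $\ZZ$-finitely generated subring $A \subset k$ embeds $G$ into $\Aut_A(\mathcal{X})$, and the theorem of Bass--Lubotzky then supplies a faithful finite-dimensional linear representation of $G$ over a finitely generated commutative ring. Thus $G$ is a finitely generated torsion linear group in characteristic zero, hence finite by Schur's theorem. Therefore $\Aut_k(X)$ is locally finite.

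\textbf{Main obstacle.} The substantive work is external: Amerik's existence of a non-preperiodic algebraic point and Bass--Lubotzky's linearity theorem each power one half of the argument. Given these, the only care needed on our side is in the spread-outs — putting $X$, $f$, and $x$ (respectively the $G$-action) on a common model over a $\ZZ$-finitely generated subring $A \subset k$ — which is routine but has to be done compatibly. After that, arithmetic hyperbolicity supplies the contradiction in the endomorphism step and Schur's theorem finishes the linearity step.
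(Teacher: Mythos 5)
Your overall structure matches the paper's proof: arithmetic hyperbolicity plus Amerik's theorem for the endomorphism half, then Bass--Lubotzky for local finiteness. The local finiteness part is fine, and unpacking Bass--Lubotzky into a linear embedding plus Schur's theorem is a correct paraphrase of what their proof does; the paper simply cites \cite[Corollary~1.3]{Bass} as a black box.

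However, there is a gap in the endomorphism step. Amerik's theorem, in the form used here, concludes that if every orbit is finite then there exist \emph{distinct} positive integers $n, m$ with $f^n = f^m$ (this is what ``finite order'' means for a dominant self-map). You write ``Hence $f^N = \mathrm{id}_X$ for some $N \geq 1$'', but that stronger conclusion does not follow immediately from $f^n = f^m$. One needs a separate argument, using the dominance of $f$: the paper's Lemma \ref{lem:periodic_is_aut} proves that for a \emph{dominant} endomorphism with $f^n = f^m$ one in fact has $f^{|n-m|} = \mathrm{id}_X$, hence $f$ is an automorphism of finite order. (Without dominance this is false: $\mathbb{A}^2 \to \mathbb{A}^2$, $(x,y)\mapsto (x,0)$, satisfies $f^2 = f$ but is not the identity.) Your proposal skips this step entirely, and it is precisely where the hypothesis that $f$ is dominant enters. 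A secondary imprecision: you say Amerik's theorem becomes applicable ``after the standard spread-out that replaces $k$ by a suitable finitely generated subfield, and then by a number subfield.'' A finitely generated field of positive transcendence degree over $\QQ$ has no number subfield over which the data descend; the paper's Theorem \ref{thm:Amerik} instead embeds a $\ZZ$-finitely generated subring into a $p$-adic field via Cassels's embedding theorem and then runs Amerik's $p$-adic argument. You should either cite the version of Amerik's theorem valid over arbitrary algebraically closed $k$ of characteristic zero, or sketch the $p$-adic embedding step rather than a reduction to number fields.
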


To prove the second statement  of Theorem \ref{thm:ends} we   use properties of dynamical systems of arithmetically hyperbolic varieties and Amerik's theorem on existence of points with infinite orbit. The first statement  of Theorem \ref{thm:ends} then  follows from combining the second statement with  a well-known theorem of Bass--Lubotzky \cite[Corollary~1.2]{Bass}: if $X$ is a   variety over $k$ and $\Gamma\subset \Aut_k(X)$ is a finitely generated torsion subgroup, then $\Gamma$ is finite.

 However, the proof of Theorem \ref{thm1} is not a mere combination of results of Amerik and Bass--Lubotzky. Indeed, 
in our proof  of Theorem \ref{thm1} we will require the following criterion for the finiteness of the automorphism group of a projective variety.   

 \begin{theorem}  \label{thm:tor_is_fin} Let   $X$ be a projective variety over $k$. If $\Aut_k(X)$  is a  torsion group, then $\Aut_k(X)$ is a  finite group.
 \end{theorem}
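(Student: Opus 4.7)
The plan is to analyze $\Aut_k(X) = \mathbf{Aut}_X(k)$, where $\mathbf{Aut}_X$ is the automorphism group scheme of $X$---representable and locally of finite type over $k$ by Grothendieck, since $X$ is projective. I would show both that (i) its identity component $\mathbf{Aut}_X^0$ is trivial, and (ii) the resulting discrete group $\Aut_k(X)$ is finite by means of its action on $\NS(X)$.

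For (i), in characteristic zero $\mathbf{Aut}_X^0$ is smooth (Cartier), so by Chevalley's structure theorem it is an extension of an abelian variety $A$ by a connected affine algebraic group $G_{\mathrm{aff}}$. If $G_{\mathrm{aff}}$ has positive dimension, then it contains $\Ga$ (if not reductive) or $\Gm$ (if reductive); but $\Ga(k) = (k,+)$ is torsion-free in characteristic zero, and $\Gm(k) = k^{\times}$ contains elements of infinite order. If instead $A$ has positive dimension, I would argue that $A(k)$ contains non-torsion elements---by cardinality for uncountable $k$ (since $A(k)_{\mathrm{tors}}$ is countable), and by the Frey--Jarden theorem (asserting that $A(K^{\mathrm{ab}})$ has infinite rank for any abelian variety $A$ over a number field $K$) for countable $k$, after descending $A$ to a finitely generated subfield of $k$. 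Either way this contradicts the hypothesis that $\Aut_k(X)$ is torsion, so $\mathbf{Aut}_X^0 = 1$.

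For (ii), I would fix an ample line bundle $L$ on $X$ and consider the subgroup scheme $\mathbf{Aut}_X^{[L]} = \{\phi : \phi^*[L] = [L] \text{ in } \NS(X)\}$. For any such $\phi$, the graph $\Gamma_\phi \subset X \times X$ has Hilbert polynomial $t \mapsto \chi(X, L^{\otimes t} \otimes (\phi^*L)^{\otimes t}) = \chi(X, L^{\otimes 2t})$ (by Riemann--Roch, since $L \otimes \phi^*L \equiv L^{\otimes 2}$ in $\NS(X)$), which is independent of $\phi$. By Matsusaka--Mumford (Grothendieck's boundedness of the Hilbert scheme), $\mathbf{Aut}_X^{[L]}$ is therefore of finite type, and since its identity component lies in $\mathbf{Aut}_X^0 = 1$, it is in fact a finite scheme, so $\mathbf{Aut}_X^{[L]}(k)$ is finite. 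This finite group contains the kernel $K$ of $\Aut_k(X) \to \Aut(\NS(X))$. The image of this map is a torsion subgroup of $\Aut(\NS(X))$; since $\NS(X)_{\mathrm{tors}}$ is finite, the kernel of the natural projection $\Aut(\NS(X)) \to \GL_\rho(\ZZ)$ to the automorphisms of the free quotient is finite, so it suffices to show any torsion subgroup of $\GL_\rho(\ZZ)$ is finite. This is Minkowski's classical lemma: for any odd prime $p$, the kernel of $\GL_\rho(\ZZ) \to \GL_\rho(\FF_p)$ is torsion-free, so any torsion subgroup of $\GL_\rho(\ZZ)$ injects into the finite group $\GL_\rho(\FF_p)$. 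Hence $\Aut_k(X)$ is an extension of finite groups and is therefore finite.

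The main obstacle is the finiteness of the polarization stabilizer $\mathbf{Aut}_X^{[L]}$, which rests on Matsusaka--Mumford boundedness applied to graphs in $X \times X$. The abelian-variety case in step (i) also uses the nontrivial Frey--Jarden input for countable $k$; the structure theorem for algebraic groups in characteristic zero and Minkowski's lemma are then standard.
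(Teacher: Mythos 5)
Your proof follows essentially the same route as the paper: Chevalley's structure theorem together with Frey--Jarden (plus the $\Ga,\Gm$ argument) rule out positive-dimensional identity components, the stabilizer of an ample class in $\NS(X)_\QQ$ is a finite type group scheme (the paper's Proposition~\ref{prop:rep}, which you rederive via Matsusaka--Mumford boundedness), and Minkowski's lemma controls the image of $\Aut_k(X)$ acting on the N\'eron--Severi group---so the two proofs are the same argument packaged slightly differently, with the paper applying the finiteness lemma directly to $\Aut_{X/k,[L]}$ while you first kill $\mathbf{Aut}_X^0$ and then observe the polarized stabilizer is finite. One small citation caveat: for countable $k$ of positive transcendence degree over $\QQ$ (e.g.\ $\overline{\QQ(t)}$), the finitely generated subfield to which the abelian variety descends need not be a number field, so you need the Frey--Jarden theorem in its form for arbitrary finitely generated fields of characteristic zero (as in Lemma~\ref{lem:frey}) rather than the number-field version you quote.
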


Our proof of Theorem \ref{thm:tor_is_fin} uses the theorem of the base, the existence of elements of infinite order in positive-dimensional algebraic groups over $k$, and the well-known fact that automorphisms preserving a fixed ample class in the N\'eron-Severi group form a finite type    group scheme. The projectivity seems to be crucial.  

In \cite[Theorem~1.6]{JXie} we prove an analogous finiteness criterion for the group of birational self-maps \emph{using} Theorem \ref{thm:tor_is_fin}. Namely, we show that, if $X$ is a non-uniruled proper integral variety over $k$ such that the group $\mathrm{Bir}_k(X)$ of birational self-maps $X\dashrightarrow X$ over $k$ is torsion, then $\mathrm{Bir}_k(X)$ is finite.
 
We stress that, when $k=\CC$ and $X$ is smooth, the proof of Theorem \ref{thm:tor_is_fin} is much simpler. Indeed, in this case, one could for instance appeal to \cite[Theorem~2.1]{DHZ} to see that $\Aut_{\CC}(X)$ is finitely generated, so that the desired finiteness result follows directly from Bass--Lubotzky's theorem \cite[Corollary~1.2]{Bass}.

 \subsection{Persistence}
 Our second result concerns the Persistence Conjecture (see also \cite[Conjecture~1.20]{vBJK} or \cite[Conjecture~17.5]{JBook}).
 
 \begin{conjecture}[Persistence Conjecture]\label{conj:pers}
 Let $L/k$ be an extension of algebraically closed fields of characteristic zero. If $X$ is a finite type separated arithmetically hyperbolic scheme over $k$, then $X_L$ is arithmetically hyperbolic over $L$.
 \end{conjecture}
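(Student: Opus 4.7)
The plan is a spread-out and specialization argument. Fix an embedding $k\subset L$, a $\ZZ$-finitely generated subring $B\subset L$, and a finite type separated model $\mathcal{Y}\to \Spec B$ of $X_L$; the goal is to show $\mathcal{Y}(B)$ is finite. After enlarging $B$ if necessary, one can find a $\ZZ$-finitely generated subring $A\subset k$, a finite type separated model $\mathcal{X}\to \Spec A$ of $X$ over $k$, and a ring map $A\to B$ such that $\mathcal{Y}\cong \mathcal{X}\otimes_A B$. By the hypothesis that $X$ is arithmetically hyperbolic over $k$, the set $\mathcal{X}(A')$ is finite for every $\ZZ$-finitely generated $A'\subset k$ carrying such a model; in particular $\mathcal{X}(A)$ is finite.

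Assume for contradiction that $\mathcal{Y}(B)$ is infinite. Each section $s\in \mathcal{Y}(B)$ is a morphism $\Spec B\to \mathcal{X}$ over $\Spec A$, and one would like to specialize: for a closed point $\mathfrak{m}\in \Spec B$ lying over $\mathfrak{p}\in \Spec A$, the residue fields are finite, so reduction modulo $\mathfrak{m}$ yields an $(A/\mathfrak{p})$-point of $\mathcal{X}$. If, as $s$ varies, infinitely many of these specializations were pairwise distinct, then after a standard Chevalley-type argument one would extract infinitely many $A$-points, contradicting the finiteness of $\mathcal{X}(A)$. In the complementary case, infinitely many $B$-sections share the same specialization at every closed point of $\Spec A$; these should then assemble into a non-constant algebraic family of points of $X$, i.e.\ a non-constant morphism from a positive-dimensional $k$-variety $V$ to $X$, and a further specialization of $V$ at closed points of its model would in turn produce the required infinite set of $A$-points.

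The main obstacle — and the reason Conjecture~\ref{conj:pers} remains open — is that the step of extracting a positive-dimensional algebraic family from an infinite collection of $B$-sections is not valid in general. A priori, nothing prevents the morphisms $\Spec B\to \mathcal{X}$ from having Zariski-dense image without factoring through any common proper subvariety of $X$; the scheme-theoretic finiteness encoded in arithmetic hyperbolicity carries no information about boundedness of $L$-rational points. Some extra geometric input — a form of boundedness for the family of sections $\Spec B\to \mathcal{X}$ — is needed to convert infinitely many $B$-sections into infinitely many $A$-sections. This is precisely the gap the \emph{mildly bounded} condition of Definition~\ref{defn:mb} is designed to fill, and any unconditional proof of Conjecture~\ref{conj:pers} appears to require a genuinely new mechanism for transferring rational-point information from $L$ back down to $k$.
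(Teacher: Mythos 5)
This statement is a conjecture, not a theorem, and the paper does not prove it: the paper proves it only under the additional hypothesis of \emph{mild boundedness} (Theorem~\ref{thm:mb}, via Lemmas~\ref{lem1} and~\ref{lem2}). You correctly recognize this and, rather than fabricating a proof, you sketch the natural attempt and locate the obstruction, which is the appropriate response. Your diagnosis --- that arithmetic hyperbolicity of $X$ over $k$ gives no control over an infinite family of sections $\Spec B\to\mathcal{X}$ with $B\subset L$ not contained in $k$, so some extra boundedness is needed to pass from $L$ back to $k$ --- matches the motivation for Definition~\ref{defn:mb}.

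One refinement worth noting, since your sketch of where the argument would break is not quite how the paper's conditional proof runs. In Lemma~\ref{lem1} the transfer mechanism is not specialization at closed points with finite residue fields, but rather the following. After reducing to $\mathrm{trdeg}_k L=1$, a section $\Spec B\to\mathcal{X}$ is reinterpreted as a $k$-morphism $C\to X$ where $C:=\Spec(B\otimes_A k)$ is a smooth affine curve over $k$. Mild boundedness provides marked points $c_1,\ldots,c_m\in C(k)$ such that $\Hom_k((C,c_1,\ldots,c_m),(X,x_1,\ldots,x_m))$ is finite for any choice of $x_i\in X(k)$; after enlarging $A$ to $A'$ so that the $c_i$ become $A'$-sections of $\mathcal{C}$, every $B$-section sends these marked points into the \emph{finite} set $\mathcal{X}(A')$ (this is where arithmetic hyperbolicity over $k$ is used). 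So $\mathcal{X}(B)$ sits inside a finite union of finite Hom-sets. Your phrasing in terms of ``extracting a positive-dimensional algebraic family'' and specializing it gestures at the same difficulty, but the actual bottleneck is boundedness of pointed curve morphisms $C\to X$ rather than assembling sections into a family. With that adjustment, your account of why the conjecture is open and what mild boundedness buys is accurate.
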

 
 If $X$ is proper over $k$, then the Persistence Conjecture is in fact a consequence of the Green--Griffiths--Lang conjecture. Indeed, if every subvariety of $X$ over $k$ is of general type, then the same holds for the subvarieties of $X_L$.
 
  The second main result of this paper is a general criterion for proving the Persistence Conjecture. We refer the reader to Definition \ref{defn:mb} for the notion of ``mild boundedness''.
  
  \begin{theorem}\label{thm:mb} Let $L/k$ be an extension of algebraically closed fields of characteristic zero. Let $X$ be an arithmetically hyperbolic finite type separated scheme such that, for every algebraically closed subfield $K\subset L$ containing $k$, we have that $X_K$ is mildly bounded over $K$. Then $X_L$ is arithmetically hyperbolic over $L$.
  \end{theorem}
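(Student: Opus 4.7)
The goal is to show that for every $\mathbb{Z}$-finitely generated subring $A\subset L$ and every finite type separated $A$-scheme $\mathcal{X}$ with $\mathcal{X}_L\cong X_L$, the set $\mathcal{X}(A)$ is finite. My plan is to induct on the transcendence degree of $L$ over $k$, after a standard reduction: since the hypothesis of the theorem is stable under replacing $L$ by any algebraically closed subfield of $L$ containing $k$, and any fixed datum $(A,\mathcal{X})$ is defined over the algebraic closure inside $L$ of a finitely generated subextension of $k$, I may assume that $L$ is the algebraic closure of $k(t_1,\ldots,t_n)$ for some finite $n\geq 0$.

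The base case $n=0$ gives $L=k$ and is exactly the hypothesis. For the inductive step, let $K\subset L$ be the algebraic closure of $k(t_1,\ldots,t_{n-1})$, so that $L/K$ has transcendence degree one. By the inductive hypothesis, $X_K$ is arithmetically hyperbolic over $K$, and by the assumption of the theorem, $X_K$ is also mildly bounded over $K$. I would then spread $L/K$ out to a smooth affine curve $C$ over $K$ whose function field is a finite extension of $K(t_n)$, and compatibly spread $\mathcal{X}/A$ out to a model $\mathcal{Y}\to C$ obtained as the pullback of a model of $X_K$ over a $\mathbb{Z}$-finitely generated $K$-subalgebra. Each $A$-point of $\mathcal{X}$ then yields, after possibly shrinking $C$, a section of the constant family $X_K\times_K C\to C$, reducing the problem to controlling the set of all such sections.

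The heart of the argument is to combine arithmetic hyperbolicity of $X_K$ over $K$ with mild boundedness of $X_K$ to show that this set of sections is finite. The definition of mild boundedness (Definition \ref{defn:mb}) should be tailored precisely for this step: it ought to control such sections via a finite type parameter space, and specialization at a well-chosen closed point $c\in C(K)$ should then produce $\mathbb{Z}$-finitely generated $K$-points of a $K$-model of $X_K$, to which arithmetic hyperbolicity of $X_K$ applies. The main obstacle I anticipate is this final matching step --- ensuring that the spread-out of $\mathcal{X}$, the chosen $K$-model of $X_K$, and the finite type parameter space supplied by mild boundedness fit together coherently enough that specialization yields genuinely distinct $K$-points, so that the finiteness from arithmetic hyperbolicity over $K$ forces finiteness of $\mathcal{X}(A)$. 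If mild boundedness turns out to be strictly weaker than full boundedness of sections, then a Hilbert-irreducibility-style genericity argument for choosing the specialization point $c$ will likely be required to complete the induction.
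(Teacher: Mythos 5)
Your outline matches the paper's proof: reduce to finite transcendence degree, induct on that degree, handle the transcendence-degree-one step by spreading out a $\ZZ$-finitely generated $B\subset L$ to an affine curve $\mathcal{C}=\Spec B$ and observing $\mathcal{X}(\mathcal{C})\subset\Hom_K(C,X_K)$ for $C:=\mathcal{C}_K$, then combine arithmetic hyperbolicity of $X_K$ with mild boundedness of $X_K$ to bound that $\Hom$ set. The one place your sketch diverges from the actual argument is the anticipated difficulty at the end, which does not arise: Definition \ref{defn:mb} asserts precisely that, for every smooth quasi-projective curve $C$, there exist an integer $m\geq 1$ and points $c_1,\ldots,c_m\in C(K)$ such that for every tuple $(x_1,\ldots,x_m)\in X(K)^m$ the set $\Hom_K\bigl((C,c_1,\ldots,c_m),(X,x_1,\ldots,x_m)\bigr)$ is finite. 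So there is no parameter space to identify and no genericity or Hilbert-irreducibility argument to make; one simply takes the $c_i$ that mild boundedness supplies, enlarges the base ring $A'\subset K$ so that the $c_i$ become integral sections, and then observes that (i) the set of possible evaluation tuples $\bigl(f(c_1),\ldots,f(c_m)\bigr)\in\mathcal{X}(A')^m$ is finite by arithmetic hyperbolicity over $K$, and (ii) for each fixed tuple the set of $f$'s realizing it is finite by mild boundedness. You do not need the evaluation map $f\mapsto\bigl(f(c_1),\ldots,f(c_m)\bigr)$ to be injective --- only that both the image and every fibre are finite.
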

  
  Roughly speaking, Theorem \ref{thm:mb} proves the Persistence Conjecture for varieties which are mildly bounded.  As a first application of Theorem \ref{thm:mb} we verify the Persistence Conjecture for Brody hyperbolic projective varieties.
  
  \begin{theorem}\label{thm:geometricity_intro} Let $k\subset \CC$ be an algebraically closed subfield.
Let $X$ be a projective variety over $k$. Assume that $X_{\CC}$ is Brody hyperbolic. Then $X$ is arithmetically hyperbolic over $k$ if and only if $X_{\CC}$ is arithmetically hyperbolic over $\CC$.
\end{theorem}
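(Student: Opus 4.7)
My plan is to deduce both implications from Theorem~\ref{thm:mb} together with the definition of arithmetic hyperbolicity. The ``if'' direction is immediate from the definitions: any $\mathbb{Z}$-finitely generated subring $A \subset k$ is also a $\mathbb{Z}$-finitely generated subring of $\mathbb{C}$ via $k \subset \mathbb{C}$, and any model $\mathcal{X}/A$ with $\mathcal{X}_k \cong X$ automatically satisfies $\mathcal{X}_{\mathbb{C}} \cong X_{\mathbb{C}}$, so the arithmetic hyperbolicity of $X_{\mathbb{C}}$ forces $\mathcal{X}(A)$ to be finite.

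For the ``only if'' direction, I apply Theorem~\ref{thm:mb} with $L = \mathbb{C}$. The hypothesis to verify is that $X_K$ is mildly bounded over $K$ for every algebraically closed subfield $K$ with $k \subset K \subset \mathbb{C}$. Since $X_K \otimes_K \mathbb{C} \cong X_{\mathbb{C}}$ is Brody hyperbolic by assumption, this reduces to two ingredients: (a) every Brody hyperbolic projective variety over $\mathbb{C}$ is mildly bounded over $\mathbb{C}$; and (b) mild boundedness descends along the extension $K \subset \mathbb{C}$.

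Ingredient (b) should be a formal consequence of the definition, since mild boundedness is framed in terms of Hom schemes and their behaviour under base change along algebraically closed extensions: any counterexample to mild boundedness of $X_K$ over $K$ base changes to a counterexample over $\mathbb{C}$. Ingredient (a) is the substantive analytic input: a compact Kobayashi hyperbolic complex space admits only a bounded family of holomorphic maps from any fixed complex space (through a fixed basepoint), which is precisely the sort of finiteness property that mild boundedness abstracts algebraically. I expect the main obstacle to be establishing (a), i.e., translating the analytic notion of Brody hyperbolicity into the algebro-geometric condition of mild boundedness; the natural route is to combine Brody's reparametrization lemma (to upgrade Brody hyperbolicity of the projective variety $X_{\mathbb{C}}$ to Kobayashi hyperbolicity) with Urata--Noguchi style rigidity results for holomorphic maps into Kobayashi hyperbolic targets, and then to descend the resulting bounded families of analytic maps to the algebraic Hom schemes appearing in Definition~\ref{defn:mb}.
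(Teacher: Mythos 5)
Your plan for the ``if'' direction is correct, and your overall plan for the ``only if'' direction (invoke Theorem~\ref{thm:mb} with $L=\CC$ after establishing mild boundedness of each intermediate $X_K$) matches the paper's strategy. However, there is a genuine gap in ingredient~(b), and ingredient~(a) is left as a sketch.

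The claim that mild boundedness descends ``formally'' from $\CC$ to an intermediate algebraically closed field $K$ is wrong, and it conflates mild boundedness with the stronger Hom-scheme notions. Mild boundedness (Definition~\ref{defn:mb}) is \emph{not} framed in terms of Hom \emph{schemes}: it asserts, for each curve $C$ over $K$, the existence of $K$-rational marked points $c_1,\dots,c_m\in C(K)$ such that, for all $K$-rational $x_i\in X(K)$, a set of $K$-morphisms is finite. The $\exists\forall$ quantifier runs over $K$-points, so the notion does not base-change in either direction. Concretely, your argument ``any counterexample to mild boundedness of $X_K$ base changes to a counterexample over $\CC$'' breaks down: if $X_K$ is not mildly bounded, the witnessing failure is only for $K$-rational choices of $(c_1,\dots,c_m)$; it says nothing about $\CC$-rational choices of marked points on $C_\CC$, which is what would be needed to contradict mild boundedness of $X_\CC$. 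The paper avoids this issue entirely by routing through $(1,1)$-boundedness (equivalently geometric hyperbolicity), which \emph{is} a Hom-scheme finiteness condition: it cites \cite[Theorem~7.2]{JKa} for the ascent $k\rightsquigarrow K$ of $(1,1)$-boundedness and \cite[Lemma~4.6]{JKa} for ``$(1,1)$-bounded $\Rightarrow$ mildly bounded'' at each level, then applies Theorem~\ref{thm:mb}. Similarly, for ingredient~(a) the paper proves that $X$ is algebraically hyperbolic over $k$ via the chain Brody hyperbolic $\Rightarrow$ Kobayashi hyperbolic (Brody's lemma) $\Rightarrow$ algebraically hyperbolic over $\CC$ \cite[Theorem~1.2]{JKa} $\Rightarrow$ algebraically hyperbolic over $k$ \cite[Theorem~7.1]{JKa}, rather than via ad hoc Urata--Noguchi-type rigidity applied directly to the mild boundedness condition. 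You should replace the purported formal descent of mild boundedness with the base-change compatibility of algebraic hyperbolicity or $(1,1)$-boundedness, and then deduce mild boundedness of $X_K$ from $(1,1)$-boundedness of $X_K$ for each $K$.
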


  We also use Theorem \ref{thm:mb} to prove the Persistence Conjecture for varieties with a finite map to an abelian variety; see Theorem \ref{thm:yamanoi}. Moreover, 
in a series of papers \cite{vBJK, JLevin, JLitt, JSZ}, we \emph{use} Theorem \ref{thm:mb} to prove the Persistence Conjecture in the following cases:
  
  \begin{enumerate}
  \item  surfaces with non-trivial irregularitiy \cite {vBJK},  
  \item  varieties with a quasi-finite map to a semi-abelian variety \cite{vBJK},
    \item hyperbolically embeddable smooth affine varieties \cite{JLevin},
    \item varieties with a quasi-finite period map \cite{JLitt}, and
    \item moduli spaces of polarized varieties with semi-ample canonical bundle \cite{JSZ};
  \end{enumerate}

\begin{ack} We are most grateful to Michel Brion for  several helpful discussions and comments.   We thank Antoine Chambert-Loir for many helpful comments on an earlier version of our paper. We thank Remy van Dobben de Bruyn for pointing us to Frey-Jarden's theorem \cite{FreyJarden}.
We thank Jason Starr for discussions on automorphisms of groupless varieties and Daniel Loughran for discussions on arithmetically hyperbolic varieties. We thank Ljudmila Kamenova for helpful discussions on algebraic hyperbolicity and hyperk\"ahler varieties.  We thank Ekaterina Amerik  and Ronan Terpereau for helpful discussions on dynamical systems.  
This research was supported through the programme  ``Oberwolfach Leibniz Fellows'' by the Mathematisches Forschungsinstitut Oberwolfach in 2018.
We gratefully acknowledge support from SFB/Transregio 45.
\end{ack}

   \begin{con} Throughout this paper $k$ will be an algebraically closed field of characteristic zero.
 A variety over  $k$ is a finite type separated   integral $k$-scheme.  
 \end{con}

 \section{Groupless varieties}\label{section:groupless}
Following  \cite{JV}, we say that  a variety $X$ over a field $k$ is \emph{groupless} if,  for every  finite type connected group scheme $G$ over $k$, every morphism $G\to X$ is constant. 
 Grouplessness  is sometimes referred to as ``algebraic hyperbolicity'' or ``algebraic Lang hyperbolicity''; see   \cite[Remark~3.2.24]{Kobayashi}. To avoid confusion, we will    only use the term ``algebraically hyperbolic'' for the notion defined by Demailly \cite{Demailly, JKa}.
 
 We will make use of the   following simple lemma; for a proof   we refer to \cite{JKa}.

 \begin{lemma}\label{lem:groupless}
The following statements hold.
 \begin{enumerate}
\item A finite type scheme $X$  over $k$   is groupless over $k$ if and only if  every morphism $\mathbb{G}_{m,k}\to X$ is constant and, for every abelian variety $A$ over $k$, every morphism $A\to X$ is constant.
 \item  A proper scheme $X$ over $k$ is groupless over $k$ if and only if, for every abelian variety $A$ over $k$, every morphism of varieties $A\to X$ is constant.  
 \item   Let $X$ be a  proper groupless scheme    over $k$. Then, for every smooth variety $S$ over $k$ and every dense open $U\subset S$, we have that any morphism $U\to X$ extends uniquely to a morphism $S\to X$.
\end{enumerate}
\end{lemma}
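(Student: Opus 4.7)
The plan is to prove the three parts in order, using (1) to deduce (2) by a short reduction, and using (1) together with resolution of singularities to deduce (3).

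For (1), the forward direction is immediate: $\mathbb{G}_{m,k}$ and every abelian variety are themselves finite type connected group schemes, so a groupless $X$ satisfies the stated constancy. For the reverse direction, given $f\colon G\to X$ with $G$ a finite type connected group scheme, I would invoke Chevalley's structure theorem (valid since $k$ is perfect) to get $1\to L\to G\to A\to 1$ with $L$ connected affine and $A$ an abelian variety. The strategy is to show first that $f|_L$ is constant and then descend to $A$. For the first, I use the classical fact that a connected linear algebraic group $L$ is generated by its one-parameter subgroups: there exist morphisms $\phi_i\colon H_i\to L$ with each $H_i\in\{\mathbb{G}_{a,k},\mathbb{G}_{m,k}\}$ whose product map $\mu\colon H_1\times\cdots\times H_n\to L$ is scheme-theoretically surjective. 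Fixing all but one coordinate, $f\circ\mu$ restricts to a morphism $H_i\to X$, which is constant when $H_i=\mathbb{G}_{m,k}$ by hypothesis and when $H_i=\mathbb{G}_{a,k}$ because its restriction to the dense open $\mathbb{G}_{m,k}\subset\mathbb{G}_{a,k}$ is constant and $X$ is separated. A standard ``separately-constant implies constant'' induction then shows $f\circ\mu$ is constant, whence $f|_L$ is. Since every translate $gL\cong L$ also maps constantly into $X$, $f$ is right $L$-invariant; as $L$ is normal in $G$ and $G\to A$ is faithfully flat, $f$ descends to a morphism $\bar f\colon A\to X$, which is constant by hypothesis, so $f$ is constant.

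Part (2) reduces to (1) by showing that properness of $X$ and constancy of all $A\to X$ force every $\mathbb{G}_{m,k}\to X$ to be constant. If $g\colon\mathbb{G}_{m,k}\to X$ were non-constant, the valuative criterion of properness applied to $\mathbb{P}^1\supset\mathbb{G}_{m,k}$ would extend $g$ to a non-constant $\bar g\colon\mathbb{P}^1\to X$. Composing with any non-constant $E\to\mathbb{P}^1$ from an elliptic curve (for example, the degree-two quotient by $[-1]$) would produce a non-constant $E\to X$, contradicting the hypothesis on abelian varieties.

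For (3), uniqueness is immediate from the separatedness of $X$ and reducedness of $S$. For existence, I would first apply the valuative criterion of properness at each codimension-one point of $S$ (whose local ring is a DVR since $S$ is smooth) to extend $f$ to $V\subseteq S$ with $\operatorname{codim}(S\setminus V)\geq 2$. Next, Hironaka's resolution of indeterminacies in characteristic zero produces a proper birational $\pi\colon\tilde S\to S$, obtained as an iterated blowup of smooth centers supported in $S\setminus V$, together with an extension $\tilde f\colon\tilde S\to X$ of $f$. The fibers of $\pi$ over points of $S\setminus V$ are connected and rationally chain connected (exceptional loci of blowups of smooth centers are projective bundles). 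Because $X$ is proper and groupless, part (1) combined with the valuative criterion precludes any non-constant morphism $\mathbb{P}^1\to X$, so $\tilde f$ contracts every rational curve in any fiber of $\pi$ and is therefore constant on each such fiber. Since $S$ is normal and $\pi$ proper birational, $\pi_*\mathcal O_{\tilde S}=\mathcal O_S$, and the rigidity lemma (equivalently, a Stein-factorization argument for $\tilde f$) yields a unique $f'\colon S\to X$ with $\tilde f=f'\circ\pi$, which extends $f$. I expect the main obstacle to be this last descent step: upgrading ``$\tilde f$ is constant on each set-theoretic fiber of $\pi$'' to ``$\tilde f$ factors scheme-theoretically through $\pi$'' is where smoothness of $S$ and the groupless hypothesis on $X$ are both essential.
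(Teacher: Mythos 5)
Your proposal is correct in all three parts, and it is worth noting at the outset that the paper does not actually reproduce a proof of this lemma---it cites \cite{JKa}---so I am comparing your argument against the standard proofs in the literature rather than against a proof printed in the paper. Part (1) via Chevalley's theorem plus generation of a connected linear algebraic group by copies of $\mathbb{G}_{a}$ and $\mathbb{G}_{m}$, the observation that $\mathbb{G}_{a}\to X$ is forced to be constant because its restriction to the dense open $\mathbb{G}_{m}$ is, and the final fppf descent to the abelian quotient, is exactly the standard route. Part (2), reducing the $\mathbb{G}_{m}$ condition to the abelian-variety condition by extending to $\mathbb{P}^{1}$ (valuative criterion) and precomposing with a double cover $E\to\mathbb{P}^{1}$ from an elliptic curve, is a clean and correct reduction; one could equivalently argue directly that a proper $X$ with no nonconstant morphism from any abelian variety receives no nonconstant morphism from $\mathbb{P}^{1}$, but your elliptic-curve trick does the job.

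The one place where you should tighten the write-up is the rational chain connectivity claim in part (3). Your parenthetical justification (``exceptional loci of blowups of smooth centers are projective bundles'') handles a \emph{single} blowup, but after an iterated blowup the fiber $\pi^{-1}(s)$ is a union of strict transforms and successive exceptional components, and the projective-bundle description applies only one stage at a time. The statement you need---that the fibers of a proper birational morphism $\tilde S\to S$ from a smooth variety to a smooth (or even normal) variety in characteristic zero are rationally chain connected---is true, but it is a theorem in its own right; for an iterated blowup it can be proved by induction on the number of blowups, and in general it follows from results of Hacon--McKernan or from the classical fact (Abhyankar) that such exceptional fibers are uniruled, combined with an inductive connectedness argument. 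Once this is in place, the rest of your descent argument is correct and is more cleanly packaged as: the graph morphism $(\pi,\tilde f)\colon\tilde S\to S\times X$ has image $\Gamma$, the projection $\Gamma\to S$ is proper and, by constancy of $\tilde f$ on fibers of $\pi$, quasi-finite, hence finite; being also birational onto the normal $S$, it is an isomorphism by Zariski's Main Theorem, and $\tilde f$ descends. Alternatively you could simply invoke the ``no rational curves'' extension lemma (Debarre, \emph{Higher-Dimensional Algebraic Geometry}, Cor.~1.44, or Koll\'ar, \emph{Rational Curves on Algebraic Varieties}, Cor.~VI.1.9) as a black box, which is essentially what the cited reference \cite{JKa} does.
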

%
%
%

\section{Arithmetic hyperbolicity}\label{section:ar_hyp}
Central to  this paper is the  notion of being arithmetically hyperbolic (defined in the introduction).
We unravel what  arithmetic hyperbolicity entails for affine varieties. To do so, let $X$ be an affine variety over $k$. Choose integers $n\geq 1$ and $m\geq 1$,   choose polynomials $f_1,\ldots, f_n \in k[x_1,\ldots,x_m]$, and choose an isomorphism $$X\cong \Spec(k[x_1,\ldots,x_m]/(f_1,\ldots,f_n)).$$ Let $A_0$ be the subring of $k$ generated by the (finitely many) coefficients of the polynomials $f_1,\ldots, f_n$. Note that $A_0\subset k$ is a $\ZZ$-finitely generated subring. Define $$\mathcal{X}:=\Spec (A_0[x_1,\ldots, x_m]/(f_1,\ldots,f_n))$$ and note that $\mathcal{X}_k \cong X$. Now, the following statements are equivalent.
\begin{enumerate}
\item The variety $X$ is arithmetically hyperbolic over $k$.
\item For every   $\ZZ$-finitely generated subring $A\subset k$ containing $A_0$, the set
\[
\{(a_1,\ldots,a_m)\in A^m \ | \ f_1(a_1,\ldots,a_m) = \ldots = f_n(a_1,\ldots,a_m) =0\}
\] is finite.
\end{enumerate}
 Thus, roughly speaking, one could say that an algebraic variety over $k$ is arithmetically hyperbolic over $k$ if it has only finitely many ``$A$-valued points'', for any choice of finitely generated subring $A\subset k$. 
    Roughly speaking,  to test the arithmetic hyperbolicity of a variety,   one has to choose a model  and ``check'' the finiteness of integral points on every $\ZZ$-finitely generated subring of $k$.

 We conclude this section with examples of arithmetically hyperbolic varieties.

 \begin{example}[Closed subvarieties of abelian varieties]\label{examples} It follows from Faltings's theorem \cite{FaltingsComplements} that 
 a one-dimensional variety $X$ over $k$ is arithmetically hyperbolic over $k$ if and only if it is groupless over $k$.
Moreover,  it follows from Faltings's theorem   \cite{FaltingsLang} that a closed subvariety $X$  of an abelian variety $A$ over $k$ is arithmetically hyperbolic over $k$ if and only if $X$ is groupless.   
\end{example}
 
 \begin{example}[Moduli spaces] Let $g\geq 1$ be an integer, let $n>2$ be an integer, and let $\mathcal{A}_g^n$ be the fine moduli space of $g$-dimensional principally polarized abelian schemes with full level $n$ structure over $k$. By Faltings's theorem (formerly the Shafarevich conjecture), the variety  $\mathcal{A}_g^n$ is an arithmetically hyperbolic smooth quasi-projective variety over $k$. In particular, if $X$ is a  variety over $k$ which admits  a 
  a quasi-finite morphism to $\mathcal{A}_g^n$, then $X$ is arithmetically hyperbolic over $k$. Applications of Faltings's result are given in \cite{JL, JLFano} to moduli spaces of Fano threefolds and low degree smooth hypersurfaces. 
\end{example}
 
 \begin{example}[Lawrence--Sawin] Let $A$ be an abelian variety over $\Qbar$ of dimension at least four.
 Following the strategy of Lawrence--Venkatesh \cite{LawrenceVenkatesh}, it is shown in \cite{LawrenceSawin}   that certain moduli spaces of smooth hypersurfaces in $A$ are arithmetically hyperbolic over $\Qbar$.
 \end{example}

 \begin{example}
 Standard conjectures on Calabi-Yau varieties imply that    a Calabi-Yau variety is not arithmetically hyperbolic. For example,  one can show that a hyperkähler variety with Picard rank at least three is not arithmetically hyperbolic; see Theorem \ref{thm:hyp_kah_intro}.
 \end{example}

\subsection{Integral points   on abelian varieties}\label{section:ht} In this section we   show that arithmetically hyperbolic varieties are groupless (Proposition \ref{prop:ar_is_gr}). To prove this result,  recall that, for $K$  a finitely generated field of characteristic zero and   $A$   an abelian variety over $K$, the abelian group $A(K)$ is finitely generated  \cite[Cor.~7.2]{ConradTrace}.
The rank of  $A(K)$   can grow arbitrarily large over finite extensions of $K$, as was shown by Frey-Jarden \cite{FreyJarden}.

\begin{lemma}[Frey-Jarden's rank jumping]\label{lem:frey} Let $K$ be a finitely generated field of characteristic zero and let $A$ be an abelian variety over $K$. Then there is a finite field extension $L/K$ such that the rank of $A(L)$ is strictly bigger than the rank of $A(K)$.
\end{lemma}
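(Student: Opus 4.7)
The plan is to derive the statement by contradiction from the main result of Frey and Jarden \cite{FreyJarden}, which asserts that for a positive-dimensional abelian variety $A$ over a finitely generated field $K$ of characteristic zero, the group $A(K^{\mathrm{ab}})$ has infinite rank. (Their original paper treats number fields; the general finitely generated case follows by a routine spreading-out and specialization argument, as specialization is rank non-decreasing at sufficiently generic closed points.)

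First I would assume, toward a contradiction, that $\rank A(L) = \rank A(K)$ for every finite extension $L/K$. Since both ranks are finite by the Mordell--Weil theorem for finitely generated fields \cite[Cor.~7.2]{ConradTrace}, the canonical injection $A(K)\otimes_{\ZZ}\QQ \hookrightarrow A(L)\otimes_{\ZZ}\QQ$ is then an equality of finite-dimensional $\QQ$-vector spaces for every such $L$.

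Next, I would pass to the directed colimit over all finite subextensions $K \subset L \subset K^{\mathrm{ab}}$; since every element of $A(K^{\mathrm{ab}})$ already lies in $A(L)$ for some such $L$, this yields
\[
A(K^{\mathrm{ab}}) \otimes_{\ZZ} \QQ \;=\; A(K) \otimes_{\ZZ} \QQ,
\]
so in particular $\rank A(K^{\mathrm{ab}})$ is finite, contradicting Frey--Jarden.

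The main obstacle is the Frey--Jarden theorem itself, which I would cite as a black box; once it is granted, the rest of the argument is purely formal, with the failure of rank jumping along every finite extension propagating up the whole tower inside $K^{\mathrm{ab}}$ and colliding with the infinite-rank result.
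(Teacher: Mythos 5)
The paper does not actually prove this lemma; it is stated as a citation to Frey--Jarden and used as a black box, so there is no ``paper's proof'' against which to line up your argument step by step. That said, your reconstruction --- deriving the rank-jumping statement by contradiction from the stronger assertion that $A(K^{\mathrm{ab}})$ has infinite $\ZZ$-rank --- is a perfectly valid formal deduction. The colimit computation is sound: under the contradictory hypothesis that $\rank A(L) = \rank A(K)$ for \emph{every} finite extension $L/K$, the injections $A(K)\otimes\QQ \hookrightarrow A(L)\otimes\QQ$ between finite-dimensional $\QQ$-vector spaces of equal dimension are all isomorphisms, hence so are all the transition maps in the directed system indexed by finite subextensions of $K^{\mathrm{ab}}/K$, and since $\otimes_{\ZZ}\QQ$ commutes with filtered colimits you correctly obtain $A(K^{\mathrm{ab}})\otimes\QQ = A(K)\otimes\QQ$, a finite-dimensional space, contradicting the infinite-rank theorem.

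Two caveats worth recording. First, both the lemma and your cited black-box theorem implicitly require $\dim A \geq 1$ (for the trivial abelian variety the rank is $0$ over every extension), which is fine in the paper's application but should be made explicit. Second, and more substantively, your parenthetical justification for passing from number fields to finitely generated fields is not quite right as stated. Specialization at a generic closed point of a spreading-out $\mathcal{A} \to U$ gives an injection $A(K) \hookrightarrow \mathcal{A}_v(\kappa(v))$, i.e.\ it transfers rank information \emph{from} the function field \emph{to} the number field, which is the opposite of what you need: you want to deduce infinite rank of $A(K^{\mathrm{ab}})$ from infinite rank of $\mathcal{A}_v(\kappa(v)^{\mathrm{ab}})$, and there is no a priori map in that direction. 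The extension of Frey--Jarden's theorem to finitely generated fields is true, but the ``routine spreading-out and specialization'' gloss does not by itself establish it --- one needs either to rework their Hilbert-irreducibility argument over the function field directly, or to produce new independent points over finite extensions of $K$ by a curve-section/twist argument. If you intend to cite the finitely-generated case rather than prove it, that is exactly what the paper does, and then your proof reduces to the correct formal colimit step; but the parenthetical as written claims an argument it does not actually supply.
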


\begin{remark}[Hassett-Tschinkel's theorem] Let $K$ be a  field of characteristic zero and let $A$ be an abelian variety over $K$. 
Frey-Jarden's theorem implies that there is a finite field extension $L/K$  such  that $A(L)$ is dense in $A$.  Hassett-Tschinkel proved the   stronger statement that, there is a finite field   extension $L/K$ and a point $P$ in $A(L)$ such that  the subgroup generated by $P$ in $A(L)$ is Zariski dense in $A$. Indeed, 
 to prove this, we may and do assume that $K$ is a finitely generated field (of characteristic zero). Then, the statement of the proposition follows from the proof of     \cite[Prop~3.1]{HassettTschinkel}. (In \emph{loc. cit.}, the authors assume $K$ is a number field, but their proof works for every finitely generated field of characteristic zero.)
 \end{remark}
 
 \begin{corollary}\label{cor:pot_den0}
 Let $k$ be an algebraically closed field of characteristic zero and let $G$ be an abelian variety over $k$. Then there is a finitely generated subfield  $L\subset k$ and an abelian variety $\mathcal{G}$ over $L$ with $\mathcal{G}_k \cong G$ over $k$ such that $\mathcal{G}(L)$ is  Zariski-dense  in $G$. 
 \end{corollary}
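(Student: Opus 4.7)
The plan is to reduce to the Hassett--Tschinkel statement given in the preceding remark by a standard spreading-out argument, and then handle the base change carefully.

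First, I would spread $G$ out to a model over a finitely generated subfield. Since $G$ is an abelian variety over $k$ and $k$ is a filtered colimit of its finitely generated subfields, there exists a finitely generated subfield $L_0 \subset k$ together with an abelian variety $\mathcal{G}_0$ over $L_0$ such that $\mathcal{G}_{0,k} \cong G$ as abelian varieties over $k$. (Concretely, pick a projective embedding of $G$ and descend the finitely many defining equations, using that the group law is likewise given by finitely much data; then increase $L_0$ to ensure the descended object is still smooth, proper and has a section, i.e.\ is an abelian variety.)

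Next, I would invoke the Hassett--Tschinkel statement recorded in the remark preceding the corollary. Since $L_0$ is a finitely generated field of characteristic zero, that statement produces a finite field extension $L/L_0$ together with a point $P \in \mathcal{G}_0(L)$ whose cyclic subgroup $\langle P \rangle \subset \mathcal{G}_0(L)$ is Zariski-dense in $\mathcal{G}_0$. Since $k$ is algebraically closed of characteristic zero and $L/L_0$ is finite, the embedding $L_0 \hookrightarrow k$ extends to an embedding $L \hookrightarrow k$; fix such an extension.

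I then set $\mathcal{G} := \mathcal{G}_{0,L}$, so that $\mathcal{G}$ is an abelian variety over $L$ with $\mathcal{G}_k \cong \mathcal{G}_{0,k} \cong G$. By construction, the point $P$ defines an element of $\mathcal{G}(L)$, and the cyclic subgroup it generates sits inside $\mathcal{G}(L)$. It remains to check that the Zariski-density of $\langle P \rangle$ in $\mathcal{G}_0$ (as a variety over $L_0$) forces Zariski-density of $\mathcal{G}(L)$ in $G$ (as a variety over $k$). This is the only mildly delicate point: if $Z \subset G$ were the Zariski closure of $\mathcal{G}(L)$ and $Z \neq G$, then by enlarging $L$ inside $k$ we could descend $Z$ to some finite extension $L'$ of $L$ inside $k$, obtaining a proper closed subvariety $Z' \subset \mathcal{G}_{L'}$ containing every Galois-translate of $P$; pushing down along $\mathcal{G}_{L'} \to \mathcal{G}_0$ and taking the union of Galois-translates would yield a proper closed subvariety of $\mathcal{G}_0$ containing $\langle P \rangle$, contradicting Hassett--Tschinkel. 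Thus $\mathcal{G}(L)$ is Zariski-dense in $G$, as required.

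The main (very mild) obstacle is just this last compatibility between Zariski-density over $L$ and Zariski-density over $k$; everything else is bookkeeping built on the quoted theorems of Frey--Jarden and Hassett--Tschinkel.
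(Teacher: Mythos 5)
Your overall strategy is essentially the paper's: spread $G$ out to a model $\mathcal{G}_0$ over a finitely generated subfield, invoke the density result from the remark preceding the corollary to pass to a finite extension $L$ with dense $L$-points, and take $\mathcal{G} := \mathcal{G}_{0,L}$. (The paper cites the Frey--Jarden lemma at this point, you cite the stronger Hassett--Tschinkel part of that same remark; this is an inconsequential difference of emphasis.)

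The one step that would fail as written is your final density-transfer argument. You claim that if the Zariski closure $Z$ of $\mathcal{G}(L)$ in $G = \mathcal{G}_k$ were a proper subvariety, then $Z$ would ``descend to some finite extension $L'$ of $L$ inside $k$.'' This is false in general: a closed subscheme of $\mathcal{G}_k$ is cut out by finitely many equations and hence descends to a \emph{finitely generated} field extension $L' \subset k$ of $L$, but $L'$ need not be finite over $L$ (take $k$ of infinite transcendence degree over $L$). The ensuing step — pushing forward along $\mathcal{G}_{L'} \to \mathcal{G}_0$ and taking unions of Galois translates — then does not apply, since that morphism is not finite. The fact you need is nonetheless true and has a short direct proof: if $S \subset \mathcal{G}(L)$ is Zariski dense in the geometrically reduced $L$-scheme $\mathcal{G}$ and $L \subset k$ is any field extension, then $S$ remains Zariski dense in $\mathcal{G}_k$. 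On an affine chart $\Spec R \subset \mathcal{G}$, suppose $f \in R \otimes_L k$ vanishes at every $s \in S$; write $f = \sum_i c_i \otimes f_i$ with the $c_i \in k$ linearly independent over $L$ and $f_i \in R$. For each $s \in S$ the scalars $f_i(s)$ lie in $L$, so $\sum_i c_i f_i(s) = 0$ forces $f_i(s) = 0$ for all $i$; density of $S$ in $\Spec R$ and reducedness of $R$ give $f_i = 0$, hence $f = 0$. (The paper itself simply asserts the transfer of density without comment, so you are right that this deserves a word.) With this repair your argument is complete and matches the paper's proof.
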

 \begin{proof} We first ``descend'' the abelian variety $G$ over $k$ to a finitely generated subfield. Thus, choose a finitely generated subfield $K\subset k$ and an abelian variety $\mathcal{G}'$ over $K$ such that $\mathcal{G}'_k\cong G$ over $K$. By  Lemma \ref{lem:frey}, there is a finite field extension $L$ of $K$ contained in $k$ such that $\mathcal{G}'(L)$ is Zariski dense in $G$. Thus, the corollary holds with $\mathcal{G}:=\mathcal{G}'_L$.
 \end{proof}

\begin{corollary}\label{cor:pot_den} Let $k$ be an algebraically closed field of characteristic zero and let $G$ be an abelian variety over $k$. Then there is a smooth $\ZZ$-finitely generated subring $A\subset k$ and an abelian scheme $\mathcal{G}\to \Spec A$ with $\mathcal{G}_k \cong G$ such that $\mathcal{G}(A)$ is  Zariski-dense  in $G$. 
\end{corollary}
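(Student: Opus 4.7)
The plan is to upgrade Corollary \ref{cor:pot_den0} by ``spreading out'' the resulting abelian variety over $L$, together with a well-chosen finite set of its $L$-points, to an abelian scheme over a smooth $\ZZ$-finitely generated subring of $L$. Concretely, applying Corollary \ref{cor:pot_den0} I first obtain a finitely generated subfield $L \subset k$ and an abelian variety $\mathcal{H}$ over $L$ with $\mathcal{H}_k \cong G$ and $\mathcal{H}(L)$ Zariski-dense in $\mathcal{H}$. Since $\mathcal{H}$ is Noetherian, I then extract finitely many points $P_1,\ldots,P_n \in \mathcal{H}(L)$ whose Zariski closure in $\mathcal{H}$ equals $\mathcal{H}$: starting from any single $L$-point, I iteratively adjoin an $L$-point not lying in the current Zariski closure; by Noetherian induction this ascending chain stabilizes, and the stabilized closure must equal $\mathcal{H}$ since $\mathcal{H}(L)$ is Zariski-dense.

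Next I spread out via standard limit arguments (EGA IV, \S8): writing $L$ as the filtered colimit of its $\ZZ$-finitely generated subrings, I obtain a $\ZZ$-finitely generated subring $A_0 \subset L$, an abelian scheme $\mathcal{H}_0 \to \Spec A_0$ with generic fiber $\mathcal{H}$, and, after further enlarging $A_0$, sections $\widetilde{P_i} \in \mathcal{H}_0(A_0)$ restricting to $P_i$. Here I use that properness, smoothness, and the group-scheme structure each spread out from the generic point, and that an $L$-point extends to an $A_0$-section after a suitable enlargement. To make $A_0$ smooth over $\ZZ$, I apply generic smoothness to the generic fiber $\Spec(A_0 \otimes_\ZZ \QQ)$, which is an integral finite-type $\QQ$-scheme: inverting an appropriate $f \in A_0$ renders $A_0[1/f] \otimes_\ZZ \QQ$ smooth over $\QQ$; the non-smooth locus of $\Spec A_0[1/f] \to \Spec \ZZ$ is then closed and misses the generic fiber, so projects to a constructible subset of the one-dimensional Noetherian scheme $\Spec \ZZ$ missing its generic point, i.e., finitely many closed points, removable by inverting a single nonzero integer $N$. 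Setting $A := A_0[1/(Nf)]$ gives a smooth $\ZZ$-finitely generated subring of $L \subset k$.

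Finally, I take $\mathcal{G} := \mathcal{H}_0 \otimes_{A_0} A$, obtaining an abelian scheme $\mathcal{G} \to \Spec A$ with $\mathcal{G}_k \cong G$ whose $A$-points include the base changes of the $\widetilde{P_i}$; the latter map to $P_1,\ldots,P_n$ in $G(k)$. Zariski-density of $\{P_1,\ldots,P_n\}$ in $\mathcal{H}$ over $L$ persists under base change to $k$ (any $k$-polynomial vanishing on the $P_i$ decomposes, upon choosing an $L$-basis of $k$, into a sum of $L$-polynomials each vanishing on the $P_i$, hence zero), so $\mathcal{G}(A)$ is Zariski-dense in $G$, as required. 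The only potential obstacle is the spreading-out bookkeeping in the middle paragraph, which is entirely routine; no idea beyond Corollary \ref{cor:pot_den0} is needed.
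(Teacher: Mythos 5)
Your proof has a genuine gap at the very first reduction step, and it is not a bookkeeping issue: the claim that you can extract finitely many points $P_1,\ldots,P_n \in \mathcal{H}(L)$ whose Zariski closure equals $\mathcal{H}$ is simply false whenever $\dim \mathcal{H} > 0$. Each $P_i$ is a closed point of $\mathcal{H}$, so the closure of $\{P_1,\ldots,P_n\}$ is a finite (zero-dimensional) closed set, which can never be all of a positive-dimensional variety. The ``Noetherian induction'' you invoke also does not apply: Noetherian spaces satisfy the \emph{descending} chain condition on closed subsets, not the ascending one, so the increasing chain of closures $\overline{\{P_1\}} \subsetneq \overline{\{P_1,P_2\}} \subsetneq \cdots$ has no reason to stabilize, and indeed it does not. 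The rest of your argument depends crucially on there being only finitely many points to spread out — you need to invert only finitely many elements of $A_0$ to make each $P_i$ an $A_0$-section — so the approach collapses here.

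The paper avoids this entirely by \emph{not} spreading out individual points. After spreading out the abelian variety $\mathcal{H}$ of Corollary~\ref{cor:pot_den0} to an abelian scheme $\mathcal{G} \to \Spec A$ over a smooth $\ZZ$-finitely generated $A$ (which is the routine part you did correctly), it invokes \cite[Proposition~6.2]{GLL}: because the geometric fibres of $\mathcal{G}\to \Spec A$ are abelian varieties and hence contain no rational curves, and $\Spec A$ is regular, one has $\mathcal{G}(A) = \mathcal{G}(\mathrm{Frac}(A))$. So \emph{every} $L$-point extends to an $A$-section automatically, and the density of $\mathcal{G}(\mathrm{Frac}(A)) = \mathcal{H}(L)$ in $G$ passes directly to $\mathcal{G}(A)$. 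If you want an alternative that stays closer to your strategy, you could instead use that $\mathcal{H}(L)$ is a finitely generated group (Mordell--Weil over finitely generated fields, \cite[Cor.~7.2]{ConradTrace}), spread out a finite set of \emph{generators} to $A_0$-sections, and observe that the group law of the abelian scheme then places the entire group they generate inside $\mathcal{G}(A_0)$; its image in $G(k)$ is Zariski-dense because it contains $\mathcal{H}(L)$. Either way, one needs an extra input — the GLL extension property or Mordell--Weil finite generation — beyond Corollary~\ref{cor:pot_den0} and routine spreading out, contrary to your closing remark.
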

\begin{proof}
Choose a smooth $\ZZ$-finitely generated subring $A \subset k$ and an abelian scheme $\mathcal{G}\to \Spec A$ such that $\mathcal{G}_{k}\cong G$ and such that $\mathcal{G}(\mathrm{Frac}(A))$ is Zariski dense in $G$; such data exists by Corollary \ref{cor:pot_den0}.   To conclude the proof,  note that the geometric fibres of $\mathcal{G}\to S$ contain no rational curves, so that $\mathcal{G}(A) = \mathcal{G}(\mathrm{Frac}(A))$ by  \cite[Proposition~6.2]{GLL}.
\end{proof}

 \begin{proposition}\label{prop:ar_is_gr}
 If $X$ is an arithmetically hyperbolic variety over $k$, then $X$ is groupless over $k$.
 \end{proposition}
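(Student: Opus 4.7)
The plan is to prove the contrapositive: if $X$ admits a non-constant morphism $f \colon G \to X$ with $G$ equal to $\mathbb{G}_{m,k}$ or to an abelian variety over $k$, I will show that $X$ cannot be arithmetically hyperbolic. By Lemma \ref{lem:groupless}(1), this suffices. In both cases the strategy is the same: exhibit a $\ZZ$-finitely generated subring $A\subset k$, a model $\mathcal{G}$ of $G$ over $A$, and a Zariski-dense subset $\mathcal{G}(A)\subset G(k)$, and then spread $f$ out to produce infinitely many $A$-points of a model of $X$.

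The two cases differ only in the source of the density. For $G=\mathbb{G}_{m,k}$, I would take $A=\ZZ[1/p]$ for any prime $p$ and observe that $\{p^n : n\in\ZZ\}\subset A^\times = \mathbb{G}_m(A)$ is Zariski-dense in $\mathbb{G}_{m,k}$. For $G$ an abelian variety, I would apply Corollary \ref{cor:pot_den} directly to obtain an abelian scheme $\mathcal{G}/A$ with $\mathcal{G}_k\cong G$ and $\mathcal{G}(A)$ Zariski-dense in $G$.

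Next I would enlarge $A$ so as to choose a model $\mathcal{X}$ of $X$ over $A$ and spread $f$ out to a morphism $\mathcal{G}\to\mathcal{X}$ of $A$-schemes; since enlarging $A$ only enlarges $\mathcal{G}(A)$, the Zariski-density in $G$ is preserved. The composition $\mathcal{G}(A)\to\mathcal{X}(A)\hookrightarrow X(k)$ has image $f(\mathcal{G}(A))$. Because $f$ is non-constant, the closed image $f(G)\subset X$ has positive dimension, and because $\mathcal{G}(A)$ is Zariski-dense in $G$, the set $f(\mathcal{G}(A))$ is Zariski-dense in $f(G)$ and hence infinite (as $k$ is algebraically closed of characteristic zero and $f(G)$ is positive-dimensional). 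The injectivity of $\mathcal{X}(A)\hookrightarrow X(k)$, which comes from separatedness of $\mathcal{X}$ over $A$, then forces $\mathcal{X}(A)$ to be infinite, contradicting arithmetic hyperbolicity.

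The only subtle point I foresee is the bookkeeping around the spread-out: one must enlarge $A$ enough that $\mathcal{G}\to\mathcal{X}$ is defined over $A$ and $\mathcal{X}$ is separated over $A$, while keeping $A$ finitely generated over $\ZZ$ and retaining Zariski-density of $\mathcal{G}(A)$. None of this is serious; the real content of the argument is the density of integral points on $G$ supplied by Corollary \ref{cor:pot_den} (resp.\ by the explicit units in the $\mathbb{G}_m$ case), which is in turn the Frey--Jarden/Hassett--Tschinkel input.
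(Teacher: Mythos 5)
Your proposal is correct and takes essentially the same route as the paper: reduce via Lemma~\ref{lem:groupless}(1) to morphisms from $\mathbb{G}_{m,k}$ and from abelian varieties, produce a Zariski-dense set of $A$-integral points on the source (via explicit units, resp.\ Corollary~\ref{cor:pot_den}), spread out, and contradict arithmetic hyperbolicity. The only cosmetic difference is that the paper derives the contradiction by noting the (closed) image of $G\to X$ is an arithmetically hyperbolic variety with a finite dense subset of $k$-points and hence finite, whereas you directly conclude $\mathcal{X}(A)$ is infinite; these are equivalent.
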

 \begin{proof}
We first show that every morphism $f:\mathbb{G}_{m,k}\to X$ is constant. To do so, choose a $\ZZ$-finitely generated subring $A\subset k$, a model $\mathcal{X}$ for $X$ over $A$ and a morphism $F:\mathbb{G}_{m,A} \to \mathcal{X}$ with $F_k\cong f$ such that      $\mathbb{G}_m(A)$ is infinite. It follows that $\mathcal{X}(A)$ is infinite, unless $f$ is constant.

 Now, let $G$ be an abelian variety over $k$, and let $G\to X$ be a morphism.   
 To  show that $G\to X$ is constant, choose a smooth $\ZZ$-finitely generated subring $A\subset k$, a model $\mathcal{X}$ for $X$ over $A$, an abelian scheme $\mathcal{G}$ over $A$ with $\mathcal{G}_k\cong G$, and a morphism $\mathcal{G}\to \mathcal{X}$ such that $\mathcal{G}(A)$ is Zariski dense in $G$; such data  exists by Corollary \ref{cor:pot_den}.    Note that the set $ \mathcal{G}(A)$ maps to the set $ \mathcal{X}(A)$ via $\mathcal{G}\to\mathcal{X}$. Therefore, since  $\mathcal{G}(A)$  is Zariski dense in $G$, the image of the finite set $\mathcal{G}(A)$ in $\mathcal{X}(A)$  is Zariski dense in  the (closed, scheme-theoretic) image of  $G\to X$. However, since $X$ is arithmetically hyperbolic over $k$, any   closed subscheme of $X$ is arithmetically hyperbolic over $k$. Thus, the image of $G\to X$ is an arithmetically hyperbolic connected variety whose set of $k$-points contains a finite and dense subset. This implies that the image of $G\to X$ is finite, so that  $G\to X$ is constant.
  To conclude the proof,  apply  the first part of  Lemma \ref{lem:groupless}.
 \end{proof}

%
%
  
  \section{Persistence of   arithmetic hyperbolicity}\label{section:geometricity}
  In this section we study the persistence of arithmetic hyperbolicity over field extensions, under suitable ``boundedness'' assumptions related to Demailly's notion of algebraic hyperbolicity \cite{Demailly}.
  
   Our main result (Theorem \ref{thm:geometricity_intro}) says that arithmetic hyperbolicity  of a projective variety persists over field extensions provided that the variety is Brody hyperbolic.

\subsection{Mild boundedness}

With the aim of isolating the weakest     property we require for proving the persistence of arithmetic hyperbolicity along field extensions, we start with the notion of ``mild boundedness''.

  \begin{definition}\label{defn:mb}
  A finite type scheme $X$ over $k$ is \emph{mildly bounded} if, for every smooth quasi-projective curve $C$ over $k$, there exist an integer $m\geq 1$ and points $c_1,\ldots,c_m\in C(k)$ such that, for every $x_1,\ldots,x_m\in X(k)$ the set 
  \[
  \Hom_k((C,c_1,\ldots,c_m),(X,x_1,\ldots,x_m)) := \{f:C\to X \ | \ f(c_1) = x_1, \ldots, f(c_m) = x_m\}
  \] is finite.
  \end{definition}

 \begin{example}\label{ex}
 If $X$ is a smooth quasi-projective connected curve over $k$, then $X$ is mildly bounded if and only if $X\not\cong \mathbb{A}^1_k$ and $X\not\cong \mathbb{P}^1_k$; see \cite[Corollary~5.13]{vBJK}.  
 \end{example}

 \begin{lemma}\label{lem1}
 Let $k\subset L$ be an extension of algebraically closed fields of characteristic zero such that $L$ is of transcendence degree $1$ over $k$.  If $X$ is an arithmetically hyperbolic   mildly bounded variety  over $k$, then $X_L$ is arithmetically hyperbolic over $L$.  
 \end{lemma}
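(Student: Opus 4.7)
The plan is to argue by contradiction and produce infinitely many integral points on a model of $X$ itself. Assume that $X_L$ is not arithmetically hyperbolic over $L$: there is a $\ZZ$-finitely generated $A\subset L$ and a finite type separated model $\mathcal{X}/A$ of $X_L$ with $\mathcal{X}(A)$ infinite. My strategy is to use the hypothesis $\mathrm{trdeg}_k L = 1$ to spread the model isomorphism out over a smooth affine $k$-curve, turn $\mathcal{X}(A)$ into an infinite family of morphisms from that curve to $X$, use mild boundedness to force infinitely many distinct evaluations in $X(k)$, and finally recognize those evaluations as integral points of a single model of $X$ over a $\ZZ$-finitely generated subring of $k$.

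First I would apply standard spreading out (EGA IV, \S 8) to descend the model isomorphism $\mathcal{X}\otimes_A L\cong X\otimes_k L$ to an isomorphism $\mathcal{X}\otimes_A A''\cong X\otimes_k A''$ over some finitely generated $k$-subalgebra $A''\subset L$ containing $A$. Since $\mathrm{Frac}(A'')\subset L$ has transcendence degree at most $1$ over $k$, the affine $k$-variety $V:=\Spec A''$ is at most one-dimensional. If $\dim V=0$, then $A''=k$, so $A\subset k$ and $\mathcal{X}$ is already a model of $X$ over $A\subset k$; arithmetic hyperbolicity of $X$ over $k$ then forces $\mathcal{X}(A)$ to be finite, contradicting our assumption. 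Otherwise $\dim V=1$; I pass to the normalization $V'\to V$, a smooth affine $k$-curve with coordinate ring $A'''\supset A''$ inside $L$, and base change to get $\mathcal{X}\otimes_A A'''\cong X\times_k V'$ as $V'$-schemes. Sections of this trivial family are just $k$-morphisms $V'\to X$, and separatedness of $\mathcal{X}/A$ together with injectivity of $A\hookrightarrow A'''$ gives an injection $\mathcal{X}(A)\hookrightarrow \mathcal{X}(A''')\cong\Hom_k(V',X)$, hence an infinite family of morphisms $\tau_i:V'\to X$.

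Next I would use mild boundedness of $X$ on $V'$: choose $c_1,\ldots,c_m\in V'(k)$ so that $\tau\mapsto(\tau(c_j))_j$ has finite fibres on $\Hom_k(V',X)$. Since $\{\tau_i\}$ is infinite, for some $j_0$ the set $\{\tau_i(c_{j_0})\}\subset X(k)$ is infinite. The point $c_{j_0}\in V'(k)$ is a $k$-algebra map $\psi:A'''\to k$; put $A_0:=\psi(A)\subset k$, a $\ZZ$-finitely generated subring, and $\mathcal{X}_0:=\mathcal{X}\otimes_A A_0$. Chasing identifications $\mathcal{X}_0\otimes_{A_0}k=\mathcal{X}\otimes_A A'''\otimes_{A'''}k=(X\times_k V')\otimes_{A''',\,c_{j_0}}k=X$ shows that $\mathcal{X}_0$ is a model of $X$ over $A_0\subset k$. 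Each $\tau_i(c_{j_0})$ is the image of $a_i\otimes 1\in\mathcal{X}_0(A_0)$ under the injection $\mathcal{X}_0(A_0)\hookrightarrow \mathcal{X}_0(k)=X(k)$, so infinitely many distinct $\tau_i(c_{j_0})$ force $\mathcal{X}_0(A_0)$ to be infinite, contradicting arithmetic hyperbolicity of $X$ over $k$.

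The main obstacle I expect is the spreading-out step: one has to descend both the model isomorphism and its inverse to a subalgebra of $L$ that is simultaneously of finite type over $k$ (so that $\Spec A''$ is a genuine $k$-variety) and contains $A$, and then track through the base changes to confirm that the specialization of $\mathcal{X}$ at $c_{j_0}$ really recovers $X$. Once this setup is correct, the rest is essentially bookkeeping linking the $A$-structure on $\mathcal{X}$ to the $k$-structure on $X$, combining separatedness, the mild boundedness hypothesis, and arithmetic hyperbolicity of $X$ over $k$.
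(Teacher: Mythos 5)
Your proof is correct, and its core mechanism is the same as the paper's: sections over a transcendence-degree-one extension are reinterpreted as $k$-morphisms from a smooth affine $k$-curve to $X$, mild boundedness is applied to that curve, and the evaluation points are recognized as integral points of a model of $X$ over a $\ZZ$-finitely generated subring of $k$. The differences are organizational rather than conceptual. You argue by contradiction starting from an arbitrary model of $X_L$ with infinitely many $A$-points, spread the model isomorphism out to a $k$-finitely generated subalgebra $A''\subset L$, and normalize $\Spec A''$ to obtain the smooth curve $V'$; the paper instead argues directly, starts from a fixed model $\mathcal{X}$ of $X$ over $A\subset k$, lifts to a ring $B\supset A$ in $L$ chosen smooth over $\ZZ$, and takes $C=\mathcal{C}_k$. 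You also handle explicitly the reduction from an arbitrary model of $X_L$ to a model of $X$, which the paper leaves as a standard spreading-out step. Finally, at the end you use a pigeonhole argument to select a single coordinate $c_{j_0}$ whose evaluations $\tau_i(c_{j_0})$ are infinite and land in $\mathcal{X}_0(A_0)$ for a single model $\mathcal{X}_0$ of $X$, whereas the paper bounds the full evaluation tuple by $\mathcal{X}(A')^m$ after lifting all $c_1,\ldots,c_m$ to sections $\overline{c}_j\in\mathcal{C}(A')$; this saves you the step of simultaneously descending all $m$ points to a single model. Both routes are equally valid and of comparable length; yours trades a cleaner direct statement for a more self-contained treatment of the reduction to models of $X$ over subrings of $k$.
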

 \begin{proof}   
Let $A\subset k$ be a $\ZZ$-finitely generated subring and let $\mathcal{X}\to \Spec A$ be a finite type separated model for $X$ over $A$ (so that $\mathcal{X}_k \cong X$). Note that $\mathcal{X}$ is also a finite type model for $X_L$ over $A\subset L$. 

  To prove the lemma, we choose    a $\ZZ$-finitely generated subring  $A\subset B\subset L$  $B$ of $L$ containing $A$ with $\Spec B \to \Spec \mathbb{Z}$ a smooth morphism.   Note that it suffices to show that $\mathcal{X}(B)$ is finite.   We define $\mathcal{C} := \Spec B$, and note that $\mathcal{X}(\mathcal{C}) = \mathcal{X}(B)$.  

In case $B$ is actually contained in $k$, we are done. Indeed,  since $X$ is arithmetically hyperbolic over $k$, for any  $\mathbb{Z}$-finitely generated subring $ A'\subset k$ of $k$ containing $A$, the set $\mathcal{X}(A')$ is finite. Therefore, to prove that $\mathcal{X}(\mathcal{C})$ is finite, we may and  do assume that  the subring   $  B\subset L$ is not contained in $k$. 

 Let $K$ be the fraction field of $A$, and note that $K\subset k\subset L$. Then, as $B$ is not contained in $k$ and $L$ is of transcendence degree one over $k$, we have that $\mathcal{C}_K\to \Spec K$ is a smooth affine connected one-dimensional scheme over $K$.  

Define $C:= \mathcal{C}_k$.  Note that $C$ is a smooth affine   curve over $k$, and that there is an inclusion of sets 
\[ 
\mathcal{X}(\mathcal{C})=\Hom_A(\mathcal{C},\mathcal{X}) \subset \Hom_k(C,X).
\]

We now use that $X$ is mildly bounded over $k$ to show that $\mathcal{X}(\mathcal{C})$ is finite. Indeed, since $X$ is mildly bounded over $k$, there exists an integer $m\geq 1$ and points $c_1,\ldots,c_m\in C(k)$ such that, for all $x_1, \ldots, x_m\in X(k)$, the set $$\Hom_k((C,c_1,\ldots,c_m),(X,x_1,\ldots,x_m))$$ is finite. We choose $m$ and $c_1,\ldots,c_m \in C(k)$ with this property.

We now choose  a   $\ZZ$-finitely generated subring $ A'\subset k$   of $k$ containing $A$ and   $\overline{c}_1, \ldots, \overline{c}_m \in \mathcal{C}(A')$   such that     $\Spec A'\to \Spec \ZZ$ is smooth,   and  $\overline{c}_{1,k} = c_1, \ldots, \overline{c}_{m,k} = c_m$ in $C(k)$.    (In other words, we extend the base ring $A$ in such a way that the points $c_1, \ldots, c_m$ become sections of $\mathcal{C}$.)

Now,   define $\mathcal{D}:= \mathcal{C}\times_A A'$. Note that $\mathcal{X}(\mathcal{C}) \subset \mathcal{X}(\mathcal{D})$.  Thus, it suffices to show that $\mathcal{X}(\mathcal{D})$ is finite. Note that $\overline{c}_1,\ldots, \overline{c}_m$ are sections of $\mathcal{D}\to \Spec A'$.
Moreover, 
if $f:\mathcal{D} \to \mathcal{X}$ is  an element of $\mathcal{X}(\mathcal{D})$, then $f(\overline{c}_i) \in \mathcal{X}(A')$. Therefore, we have an inclusion of sets 
\[ \mathcal{X}(\mathcal{D}) \subset \bigcup_{(\overline{x}_1,\ldots, \overline{x}_m) \in \mathcal{X}(A')^m} \Hom((\mathcal{D}, \overline{c}_1,\ldots,\overline{c}_m), (\mathcal{X}, \overline{x}_1,  \ldots, \overline{x}_m)), \]
 where $\mathcal{X}(A')^m$ denotes the product of sets $\mathcal{X}(A')\times \ldots \times \mathcal{X}(A')$. 
 
Note that $\mathcal{D}_k = C$. Thus, for any $ (\overline{x}_1,\ldots, \overline{x}_m) \in \mathcal{X}(A')^m$, we have an inclusion of sets
 \[
  \Hom_{A}((\mathcal{D}, \overline{c}_1,\ldots,\overline{c}_m), (\mathcal{X}, \overline{x}_1, \ldots, \overline{x}_m)) \subset \Hom_k ( (C, c_1,\ldots,c_m), (X,\overline{x}_{1,k},\ldots, \overline{x}_{m,k})).
 \]
Thus, we conclude that  $\mathcal{X}(\mathcal{D})$ is finite from the finiteness of $\mathcal{X}(A')^m$ and the finiteness of the set $\Hom_k ( (C, c_1,\ldots,c_m), (X,\overline{x}_{1,k},\ldots, \overline{x}_{m,k}))$. This concludes the proof.
 \end{proof}

 \begin{lemma}\label{lem2}
  Let $k\subset L$ be an extension of algebraically closed fields of characteristic zero such that $L$ has finite transcendence degree  over $k$. Let $X$ be an arithmetically hyperbolic finite type separated scheme over $k$. Assume that, for every algebraically closed subfield $k\subset K\subset L$, we have that  $X_K$ is mildly bounded over $K$.  Then $X_L$ is arithmetically hyperbolic over $L$.
 \end{lemma}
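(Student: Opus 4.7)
The plan is to reduce to Lemma \ref{lem1} by induction on the transcendence degree of $L$ over $k$.

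Let $n = \mathrm{trdeg}(L/k)$. I will induct on $n$. The base case $n=0$ is trivial: since $k$ is algebraically closed and $L/k$ is algebraic, we have $L = k$, so $X_L = X$ is arithmetically hyperbolic by assumption.

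For the inductive step, assume $n \geq 1$ and the result holds for all extensions of algebraically closed fields of characteristic zero of transcendence degree at most $n-1$ satisfying the mild boundedness hypothesis. Pick a transcendence basis $t_1, \ldots, t_n$ of $L/k$, and let $K$ be the algebraic closure of $k(t_1, \ldots, t_{n-1})$ inside $L$. Then $K$ is an algebraically closed subfield of $L$ containing $k$ with $\mathrm{trdeg}(K/k) = n-1$ and $\mathrm{trdeg}(L/K) = 1$. The inductive hypothesis will be applied to the extension $k \subset K$ and the variety $X$, and Lemma \ref{lem1} to the extension $K \subset L$ and the variety $X_K$.

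To apply the inductive hypothesis, I need to verify its hypotheses for $k \subset K$: that $X$ is arithmetically hyperbolic over $k$ (given), and that for every algebraically closed subfield $k \subset K' \subset K$, the base change $X_{K'}$ is mildly bounded over $K'$. But every such $K'$ is an algebraically closed subfield of $L$ containing $k$, so mild boundedness of $X_{K'}$ follows directly from the hypothesis of the lemma. Thus, by induction, $X_K$ is arithmetically hyperbolic over $K$. To apply Lemma \ref{lem1} to $K \subset L$, I need $X_K$ to be arithmetically hyperbolic over $K$ (just established) and mildly bounded over $K$ (given by hypothesis since $K$ is an algebraically closed subfield of $L$ containing $k$). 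Lemma \ref{lem1} then yields that $X_L$ is arithmetically hyperbolic over $L$, completing the induction.

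The main obstacle is essentially bookkeeping: ensuring that an intermediate algebraically closed field $K$ exists with the right transcendence degrees so that both the inductive hypothesis and Lemma \ref{lem1} apply, and that the mild boundedness hypothesis is stable enough to descend to $K$. Both points are handled cleanly once one observes that the hypothesis of the lemma is phrased for \emph{all} algebraically closed subfields between $k$ and $L$, so it automatically restricts to the analogous hypothesis for $k \subset K$.
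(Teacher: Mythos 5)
Your proof is correct and follows essentially the same route as the paper: induct on the transcendence degree, choose an intermediate algebraically closed field $K$ of transcendence degree $n-1$ over $k$, apply the inductive hypothesis (noting the mild boundedness hypothesis descends to subfields of $K$), and finish with Lemma \ref{lem1} on the degree-one extension $K \subset L$.
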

 \begin{proof}  We proceed by induction on the transcendence degree of $L$ over $k$. Let $n:= \mathrm{trdeg}_k(L)$.   If $n=0$, we are done by our assumption that $X$ is arithmetically hyperbolic over $k$ (as $k=L$ in this case). Thus, assume that $n>0$, and  let $k\subset K\subset L$ be an algebraically closed subfield with $\mathrm{trdeg}_k(K) =n-1$. Note that $X_K$ is mildly bounded over $K$ by assumption. In particular, by the induction hypothesis (and the fact that, for every algebraically closed subfield $k\subset K' \subset K$ we have that $X_{K'}$ is mildly bounded over $K'$), we see that  $X\otimes_k K$ is arithmetically hyperbolic over $K$. Finally,   as $K\subset L$ has transcendence degree $1$ and $X_K$ is a mildly bounded    arithmetically hyperbolic variety over $K$ (by assumption), the result follows from Lemma \ref{lem1}.
 \end{proof}
 
 \begin{proof} [Proof of Theorem \ref{thm:mb}]
To say that $X_L$ is arithmetically hyperbolic over $L$ is equivalent to saying that, for every algebraically closed subfield $k\subset K\subset L$ of finite transcendence degree over $k$, the variety $X_K$ is arithmetically hyperbolic over $K$. Therefore, the result follows from Lemma \ref{lem2}. 
 \end{proof}
 
Let us briefly discuss the theorem of Siegel-Mahler-Lang and illustrate how Theorem \ref{thm:mb} can be used in practice. 
 
 \begin{remark}[Siegel-Mahler-Lang's theorem]  
 If $A$ is  a $\ZZ$-finitely generated integral domain of characteristic zero, then the theorem of Siegel-Mahler-Lang says that $(\mathbb{A}^1_{\ZZ}\setminus \{0,1\})(A)$ is finite; see \cite{LangIHES}. 
Let us show how one can deduce the Mahler-Lang theorem from the particular case of Siegel's theorem.  Indeed, first, one proves the desired finiteness statement when $\dim A = 1$, i.e., one shows that  $\mathbb{A}^1_{\Qbar}\setminus \{0,1\}$ is arithmetically hyperbolic over $\Qbar$. Now, to prove the corresponding statement for finitely generated rings of higher dimension, note that for every algebraically closed field $k$ of characteristic zero, the variety  $\mathbb{A}^1_k\setminus\{0,1\}$ is mildly bounded over $k$ (see Example \ref{ex}), so that it is arithmetically hyperbolic over $k$ by Theorem \ref{thm:mb}. This precisely means that, for every $\ZZ$-finitely generated integral domain of characteristic zero, the set $(\mathbb{A}^1_{\ZZ}\setminus \{0,1\})(A)$ is finite. 
 \end{remark}

 \subsection{A conjecture on mildly bounded varieties} It would be interesting to show that arithmetically hyperbolic varieties are actually mildly bounded, as this would imply that arithmetically hyperbolic varieties remain arithmetically hyperbolic over any extension of the base field.   
 
  \begin{conjecture}\label{conj:mb}
  If $X$ is an arithmetically hyperbolic variety over $k$, then $X$ is mildly bounded over $k$.
  \end{conjecture}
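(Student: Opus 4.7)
The plan is to proceed by contraposition. Assume $X$ is arithmetically hyperbolic but not mildly bounded, and try to produce a witness to the failure of arithmetic hyperbolicity. Fix a smooth quasi-projective curve $C$ over $k$ such that, for every integer $m \geq 1$ and every $(c_1,\ldots,c_m) \in C(k)^m$, there exists $(x_1,\ldots,x_m) \in X(k)^m$ with the set $\Hom_k((C, c_1,\ldots,c_m),(X, x_1,\ldots,x_m))$ infinite. The goal is to produce a $\ZZ$-finitely generated subring $A \subset k$ and a model $\mathcal{X}$ of $X$ over $A$ with $\mathcal{X}(A)$ infinite.

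First, I would promote these infinite Hom sets to a geometric family. After choosing a smooth projective compactification $\overline{C} \supset C$ with boundary $D$ and a projective compactification $\overline{X}$ of $X$, the Hom scheme $\Hom_k(\overline{C},\overline{X})$ is locally of finite type, and maps $C \to X$ correspond to an open subscheme $H$ (cut out by requiring $D$ to be sent into $\overline{X} \setminus X$). The failure of mild boundedness implies that for each $m$ and each choice of $(c_1,\ldots,c_m)$, some fiber of the evaluation map $\ev_m \colon H \to X^m$ is infinite. By a Noetherian argument on the irreducible components of $H$, combined with a pigeonhole as $m$ grows, one extracts a positive-dimensional irreducible variety $T$ and a universal morphism $\Phi \colon C \times T \to X$ which is non-constant in the $T$-direction.

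Next, I would spread out $(T, \Phi, X)$ to a $\ZZ$-finitely generated subring $A \subset k$, producing $(\mathcal{T}, \Phi_A, \mathcal{X})$. If a Zariski-dense set of $A$-valued points of $\mathcal{T}$ can be produced after enlarging $A$, then evaluating $\Phi_A$ at these $A$-points along a single fixed $A$-section of $\mathcal{C}$ yields an infinite subset of $\mathcal{X}(A)$, contradicting arithmetic hyperbolicity. Provided that a smooth compactification of $T$ has non-trivial Albanese variety, such points can be produced via Corollary \ref{cor:pot_den}, exactly as in the proof of Proposition \ref{prop:ar_is_gr}.

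The hard part, and the reason the statement is posed only as a conjecture, is controlling the geometry of the component $T \subset H$. Arithmetic hyperbolicity of $X$ does not obviously constrain the Hom scheme $H$, and the component $T$ extracted above could a priori be unirational or have trivial Albanese, in which case Frey--Jarden (Lemma \ref{lem:frey}) supplies no integral points to exploit. One would therefore need an additional structural input, for instance a theorem asserting that arithmetic hyperbolicity of $X$ forces every positive-dimensional component of $H$ to itself be groupless or arithmetically hyperbolic. Such a statement appears to lie beyond the methods developed in the present paper, and it plausibly requires both moduli-of-maps arguments and deeper geometric hyperbolicity results.
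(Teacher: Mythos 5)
The statement you were asked to prove is Conjecture~\ref{conj:mb}, which the paper explicitly poses as an \emph{open} conjecture; there is no proof in the paper to compare against. Your proposal correctly recognizes this and is a strategy sketch rather than a proof, and you honestly flag the obstruction: after extracting a positive-dimensional component $T$ of the Hom scheme together with a non-constant family $\Phi \colon C \times T \to X$, arithmetic hyperbolicity of $X$ gives you no control over $T$, and without integral points on (a model of) $T$ the contradiction never materializes. That is indeed the crux of the difficulty, and it is why the statement is a conjecture.

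Two smaller points worth flagging in your sketch. First, the condition you invoke to supply integral points on $T$ is wrong in an instructive way: you ask that a smooth compactification of $T$ have non-trivial Albanese, and then cite Corollary~\ref{cor:pot_den}. But non-trivial Albanese gives a map \emph{from} $T$ \emph{to} an abelian variety, whereas the mechanism used in the paper (as in Proposition~\ref{prop:ar_is_gr}) requires a non-constant map from an abelian variety (or $\mathbb{G}_m$) \emph{into} the target to transport Zariski-dense integral points; a smooth affine curve of genus $\geq 2$ has non-trivial Albanese yet is arithmetically hyperbolic and thus cannot supply such points. The condition you actually need is that $T$ fail to be groupless, not that it have non-trivial Albanese. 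Second, the Noetherian/pigeonhole step to extract a single component $T$ needs more care than stated, since $\underline{\Hom}_k(C,X)$ generally has countably many irreducible components and the infinite fiber of $\ev_m$ can land in different components as $m$ varies; however, a single positive-dimensional fiber for a single $m$ already yields a usable $(T, \Phi)$, so this step is fixable and not the real gap. The real gap remains exactly the one you named: nothing in the hypotheses constrains the geometry of $T$, and no argument in the present paper closes it.
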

 
 Mild boundedness is  essentially  the ``weakest'' notion of boundedness required for arithmetic hyperbolicity to persist over a field extension, and Conjecture \ref{conj:mb} predicts that this ``weak'' notion of boundedness holds for all arithmetically hyperbolic varieties.  Note that semi-abelian varieties are mildly bounded (see \cite[Proposition~1.9]{vBJK}). In particular, a mildly bounded projective variety might be non-arithmetically hyperbolic (even not of general type).  This makes the notion of mild boundedness easier to verify than other notions of boundedness.

 \subsection{Algebraic hyperbolicity, boundedness, Brody hyperbolicity}\label{section:alg_hyp}  
 In this section we combine our  results from our earlier work with Kamenova \cite{JKa} with Theorem \ref{thm:mb}, and  provide  new results  on the persistence of arithmetic hyperbolicity for projective varieties. 
We start with defining the following three notions of boundedness introduced in \cite[\S4]{JKa}.

A projective variety $X$ over $k$ is \emph{algebraically hyperbolic over $k$} if, for every ample line bundle $L$ on $X$, there is a real number $\alpha(L)$ such that, for every smooth projective connected curve $C$ and every morphism $f:C\to X$, the inequality \[
\deg_C f^\ast L \leq \alpha(L)\cdot  \mathrm{genus}(C)
\] holds. Moreover,  a projective variety $X$ over $k$ is \emph{1-bounded over $k$}  if, for every smooth projective connected curve $C$ over $k$, the  scheme $\underline{\Hom}_k(C,X)$ is of finite type over $k$.
 Similarly,  a projective variety $X$ over $k$ is \emph{$(1,1)$-bounded over $k$} (or: \emph{geometrically hyperbolic over $k$}) if, for every   smooth projective connected curve $C$ over $k$, every $c$ in $C(k)$, and every $x$ in $X(k)$, the scheme
  \[
  \underline{\Hom}_k((C,c), (X,x))  
  \] parametrizing morphisms $f:C\to X$ with  $f(c) =x$ is of  finite type over $k$.

 Note that algebraic hyperbolicity implies $1$-boundedness, and that $1$-boundedness implies $(1,1)$-boundedness. However, 
we stress that mild boundedness is \textbf{strictly} weaker than the above notions. Indeed, a non-zero abelian variety is mildly bounded  \cite[Proposition~1.9]{vBJK}, although it is \textbf{neither} algebraically hyperbolic,   $1$-bounded,   nor $(1,1)$-bounded.

  \begin{theorem}\label{thm:geometricity3} Let $k\subset L$ be an extension of algebraically closed fields of characteristic zero.
  Let $X$ be a projective  arithmetically hyperbolic variety over $k$ which is algebraically hyperbolic over $k$, or $1$-bounded over $k$, or $(1,1)$-bounded over $k$. Then $X_L$ is arithmetically hyperbolic over $L$.
  \end{theorem}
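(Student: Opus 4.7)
The plan is to verify the hypothesis of Theorem \ref{thm:mb}: namely, that for every algebraically closed subfield $K$ with $k\subset K\subset L$, the base change $X_K$ is mildly bounded over $K$. Once this is in hand, Theorem \ref{thm:mb} immediately yields that $X_L$ is arithmetically hyperbolic over $L$. I would first reduce the three boundedness hypotheses to $(1,1)$-boundedness, using the implications algebraic hyperbolicity $\Rightarrow$ $1$-boundedness $\Rightarrow$ $(1,1)$-boundedness noted in the text.

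Next, I would show that $(1,1)$-boundedness is preserved under any extension $K/k$ of algebraically closed fields of characteristic zero. Here one uses that, for the projective variety $X$, the scheme $\underline{\Hom}_k((C,c),(X,x))$ is compatible with base change. Given data $(C,c,x)$ defined over $K$, one spreads out over a finitely generated $k$-subalgebra, reduces to the case of finite transcendence degree over $k$, and concludes finite type of the $\Hom$-scheme over $K$ from that over $k$ — this is the content of the relevant results of \cite{JKa}.

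Finally, I would deduce mild boundedness of $X_K$ from $(1,1)$-boundedness combined with grouplessness. Since $X$ is arithmetically hyperbolic, Proposition \ref{prop:ar_is_gr} gives that $X$ is groupless, and grouplessness is clearly preserved by algebraically closed base extension, so $X_K$ is groupless as well. For a smooth quasi-projective curve $C$ over $K$ with smooth projective completion $\bar C$, Lemma \ref{lem:groupless}(3) lets me extend any morphism $C\to X_K$ uniquely to $\bar C\to X_K$, so it is enough to produce $c_1,\ldots,c_m\in C(K)\subset \bar C(K)$ such that $\Hom_K((\bar C,c_1,\ldots,c_m),(X_K,x_1,\ldots,x_m))$ is finite for every tuple $(x_1,\ldots,x_m)$. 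The $(1,1)$-boundedness of $X_K$ provides a uniform bound on the dimension of the $\Hom$-scheme with a single point fixed; I would then argue that fixing further general points $c_2,\ldots,c_m\in \bar C(K)$ strictly decreases this dimension, since otherwise evaluation at a further general point of $\bar C$ would yield a non-constant morphism from a positive-dimensional component of the $\Hom$-scheme to $X_K$, and after passing through an Albanese or via a non-constant map from a connected algebraic group, this would contradict grouplessness of $X_K$.

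The main obstacle in this strategy is the last step — converting the finite-type property of the $\Hom$-scheme guaranteed by $(1,1)$-boundedness into actual \emph{finiteness} of $\Hom$-sets, i.e., the passage from finite-type to zero-dimensional. Grouplessness is the essential geometric input that bridges this gap, and this combination is exactly what the results of \cite{JKa} are designed to deliver.
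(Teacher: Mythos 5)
Your overall strategy matches the paper's exactly: reduce to the $(1,1)$-bounded case, invoke base-change invariance of $(1,1)$-boundedness from \cite{JKa} to get that $X_K$ is $(1,1)$-bounded for every intermediate algebraically closed field $K$, deduce mild boundedness, and apply Theorem \ref{thm:mb}. The only divergence is the mild-boundedness step: the paper simply cites \cite[Lemma~4.6]{JKa} for the implication ``$(1,1)$-bounded implies mildly bounded'', whereas you try to re-derive it and bring in grouplessness of $X$ (via Proposition \ref{prop:ar_is_gr}) as an additional ingredient. Grouplessness is not needed here, and in fact cannot be what makes the implication go through, since the cited lemma holds for arbitrary $(1,1)$-bounded projective varieties.

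Your sketch of that sub-lemma also has concrete gaps. First, the reduction from quasi-projective to projective curves does not require Lemma \ref{lem:groupless}(3): projectivity of $X$ together with the valuative criterion of properness already extends any $C\to X_K$ to $\bar C\to X_K$. Second, the claim that $(1,1)$-boundedness supplies a dimension bound that is \emph{uniform over all} base points $x\in X(K)$ is not immediate from the definition (which only gives finite type of $\underline{\Hom}_K((\bar C,c),(X_K,x))$ for each fixed $x$); this uniformity is precisely part of what the cited lemma has to establish. Third, the contradiction you aim for in the dimension-descent step — a non-constant morphism from a positive-dimensional component of the Hom-scheme into $X_K$ ``via an Albanese or via a non-constant map from a connected algebraic group'' contradicting grouplessness — is not a contradiction: grouplessness only forbids non-constant morphisms from connected group schemes, not from arbitrary positive-dimensional varieties. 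A correct descent argument would instead observe that if evaluation at every general point of $\bar C$ were constant on a positive-dimensional component of the Hom-scheme, all the parametrized morphisms would agree generically on $\bar C$ and hence coincide, which is absurd. In short: right skeleton, but the sketched internal argument for the boundedness implication is not sound as written, and the cleanest move is exactly what the paper does — cite \cite[Lemma~4.6]{JKa}.
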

 \begin{proof} Since algebraic hyperbolicity implies $1$-boundedness and $(1,1)$-boundedness implies $1$-boundedness, we may assume that $X$ is  $(1,1)$-bounded over $k$. Then 
 $X_L$ is  $(1,1)$-bounded  \cite[Theorem~7.2]{JKa}.   In particular, for every algebraically closed subfield $K\subset L$ containing $k$, the projective variety $X_K$ is $(1,1)$-bounded over $K$, and thus mildly bounded over $K$ (see for example \cite[Lemma~4.6]{JKa}).   By Theorem \ref{thm:mb}, we   conclude that $X_L$ is arithmetically hyperbolic over $L$.  
 \end{proof}

\begin{theorem}[Yamanoi]\label{thm:nwy}  
Let $X$ be a  smooth projective connected variety whose Albanese map $X\to \mathrm{Alb}(X)$ is finite.   If   $X$ is groupless, then   $X$ is algebraically hyperbolic over $k$.
\end{theorem}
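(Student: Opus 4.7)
The strategy is to reduce to Yamanoi's theorem on subvarieties of abelian varieties via the Albanese morphism, then use functoriality of algebraic hyperbolicity under finite maps.

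First, let $\alpha\colon X\to \mathrm{Alb}(X)$ denote the Albanese morphism, and let $Y\subset \mathrm{Alb}(X)$ be its scheme-theoretic image, so that $\alpha$ factors as a finite surjective morphism $X\to Y$ followed by a closed immersion $Y\hookrightarrow \mathrm{Alb}(X)$. I would first show that $Y$ contains no translate of a positive-dimensional abelian subvariety. Indeed, suppose $B+a\subset Y$ for some abelian subvariety $B\subset\mathrm{Alb}(X)$ and $a\in\mathrm{Alb}(X)(k)$. Pulling back the finite morphism $\alpha$ along $B\hookrightarrow Y$, we obtain a finite surjective morphism $Z\to B$ from a (not necessarily irreducible) smooth variety $Z$, equipped with a map $Z\to X$. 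Every connected component of $Z$ is then a finite \'etale cover of the abelian variety $B$ (after desingularization and using purity), hence is itself an abelian variety, and this produces a non-constant morphism from an abelian variety to $X$, contradicting grouplessness via Lemma \ref{lem:groupless}.

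The second, and main, step is to invoke Yamanoi's theorem: a closed subvariety $Y$ of an abelian variety which contains no translate of a positive-dimensional abelian subvariety is algebraically hyperbolic over $k$. This is the deep input — it refines the theorems of Bloch, Kawamata, and Ueno on the Ueno fibration of subvarieties of abelian varieties, and in its strongest quantitative form it rests on Yamanoi's work on the second main theorem for subvarieties of semi-abelian varieties and the degeneracy of higher-dimensional holomorphic curves. This is where the real content of the theorem sits, and I would simply cite it.

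Finally, I would transfer algebraic hyperbolicity from $Y$ to $X$ along the finite morphism $X\to Y$. If $L$ is an ample line bundle on $X$, then for any sufficiently large integer $N$ the sheaf $\alpha_\ast L^{\otimes N}$ admits a quotient (or one chooses an ample $M$ on $\mathrm{Alb}(X)$ such that $\alpha^\ast M\otimes L^{-1}$ is effective) so that one has an inequality $\deg_C f^\ast L \leq c\cdot \deg_C (\alpha\circ f)^\ast M$ for some constant $c$ independent of $f\colon C\to X$. Applying the algebraic hyperbolicity bound for $Y\subset\mathrm{Alb}(X)$ to $\alpha\circ f$ then yields the desired linear bound $\deg_C f^\ast L\leq \alpha(L)\cdot\mathrm{genus}(C)$, completing the proof.

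The main obstacle is of course Yamanoi's theorem itself in step two; the rest is formal once grouplessness is translated into the no-translates condition and finiteness of $\alpha$ is used to push the bound up from $Y$ to $X$.
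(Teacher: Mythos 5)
Your reduction to the Albanese image breaks down at the first step, and the intermediate claim you need there is in fact false. You assert that grouplessness of $X$ forces $Y=\alpha(X)\subset\Alb(X)$ to contain no translate of a positive-dimensional abelian subvariety, arguing that each component of $\alpha^{-1}(B+a)$ becomes, after desingularization, a finite \emph{\'etale} cover of $B$ and hence an abelian variety. But a finite surjection from a smooth projective variety onto an abelian variety can certainly be ramified (a genus-two curve double-covering an elliptic curve is the simplest example), so the appeal to purity gives nothing. Worse, the conclusion itself fails in general. Let $A=E_1\times E_2$ be a product of elliptic curves and let $X\to A$ be a double cover branched along a smooth general ample divisor. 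Then $X$ is a smooth projective surface of general type whose Albanese map is finite and surjective onto an abelian surface, so $Y=\Alb(X)$ contains plenty of translates of elliptic curves. Yet $X$ is groupless: every rational curve in $X$ collapses to a point under the finite Albanese map, and the preimage in $X$ of any translate of a positive-dimensional abelian subvariety of $A$ meets the (ample) branch divisor, hence is a ramified double cover of an elliptic curve and so a curve of geometric genus at least two. Thus $X$ is groupless while $Y$ is nowhere near groupless, and there is no algebraic hyperbolicity of $Y$ to transfer.

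Your steps one (factoring $\alpha$ through its image) and four (propagating the linear degree bound along a finite morphism, via an ample $M$ on $\Alb(X)$ with $\alpha^\ast M\otimes L^{-1}$ effective) are sound, but without step two the chain collapses. The result that must be cited is Yamanoi's theorem for varieties of maximal Albanese dimension, applied to $X$ itself, not the subvariety-of-an-abelian-variety theorem applied to $Y$: these are genuinely different statements, and bridging them is precisely where the difficulty of the theorem lives. The paper's own proof involves no such decomposition at all: it observes that algebraic hyperbolicity is compatible with extensions of the algebraically closed base field \cite[Theorem~7.1]{JKa}, reduces to $k=\CC$, and then cites Yamanoi over $\CC$.
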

\begin{proof} Since algebraic hyperbolicity  is compatible with extensions of  the base field \cite[Theorem~7.1]{JKa}, we may and do assume that $k=\CC$ in which case the result is due to Yamanoi   \cite{Yamanoi}.  
\end{proof}

\begin{theorem} \label{thm:yamanoi} Let $k\subset L$ be an extension of algebraically closed fields of characteristic zero. If $X$ is a smooth projective connected arithmetically hyperbolic variety over $k$ whose Albanese map is finite, then $X_L$ is arithmetically hyperbolic over $L$.
\end{theorem}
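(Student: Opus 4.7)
The plan is to chain together three results already available in the paper: Proposition \ref{prop:ar_is_gr}, Yamanoi's Theorem \ref{thm:nwy}, and the persistence criterion Theorem \ref{thm:geometricity3}. The strategy is to upgrade the abstract arithmetic hyperbolicity hypothesis on $X$ into the concrete geometric property of algebraic hyperbolicity, at which point the conclusion follows by direct invocation of Theorem \ref{thm:geometricity3}.

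First I would observe that, by Proposition \ref{prop:ar_is_gr}, arithmetic hyperbolicity of $X$ over $k$ implies that $X$ is groupless over $k$. Next, since $X$ is smooth projective connected and its Albanese map $X\to \mathrm{Alb}(X)$ is finite by hypothesis, Yamanoi's Theorem \ref{thm:nwy} applies and yields that $X$ is algebraically hyperbolic over $k$. At this stage we have a projective variety which is simultaneously arithmetically hyperbolic over $k$ and algebraically hyperbolic over $k$, which is precisely the input required for Theorem \ref{thm:geometricity3}.

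To finish, I would apply Theorem \ref{thm:geometricity3} to the extension $k\subset L$ to conclude that $X_L$ is arithmetically hyperbolic over $L$, as desired. There is no real obstacle here: the content of the theorem lies entirely in Yamanoi's deep result (which supplies algebraic hyperbolicity from grouplessness for varieties with finite Albanese map) and in the persistence criterion built on mild boundedness; the present statement is essentially their formal combination.
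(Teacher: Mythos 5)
Your proof is correct and follows exactly the same chain of reasoning as the paper's own argument: Proposition \ref{prop:ar_is_gr} gives grouplessness, Theorem \ref{thm:nwy} (Yamanoi) upgrades this to algebraic hyperbolicity using the finite Albanese hypothesis, and Theorem \ref{thm:geometricity3} then yields persistence. Nothing to add.
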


\begin{proof} Since $X$ is arithmetically hyperbolic over $k$, it is groupless over $k$ (Proposition \ref{prop:ar_is_gr}). Therefore, 
it follows from  Theorem \ref{thm:nwy} that $X$ is algebraically  hyperbolic, so that  the result follows from Theorem \ref{thm:geometricity3}.  
\end{proof}

We deduce Theorem \ref{thm:geometricity_intro} from the following   more general result.

\begin{proposition}\label{prop:geometricity22} Let $k\subset L$ be an extension of algebraically closed fields of characteristic zero. 
Let $X$ be a projective  arithmetically hyperbolic variety over $k$. Suppose that there is a subfield $k_0\subset k$, a projective variety $X_0$ over $k_0$, an isomorphism $X_{0,k}\cong X$ over $k$, and an embedding $k_0\subset \CC$ such that $X_{0,\CC}$ is Brody hyperbolic. Then $X_L$ is arithmetically hyperbolic over $L$.
\end{proposition}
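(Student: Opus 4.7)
The plan is to reduce the statement to Theorem \ref{thm:mb} by verifying mild boundedness of $X_K$ for every algebraically closed subfield $k\subset K\subset L$. By Theorem \ref{thm:mb}, it then suffices to prove that, for every such $K$, the projective variety $X_K$ is mildly bounded over $K$. Since $X_K \cong X_{0,K}$ for every algebraically closed field $K$ containing $k_0$, we can work entirely with the model $X_0$ and the fixed embedding $k_0 \subset \CC$.

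The first step is to use the Brody hyperbolicity of $X_{0,\CC}$ to obtain that $X_{0,\CC}$ is $(1,1)$-bounded (i.e., geometrically hyperbolic) over $\CC$. This is classical: for a Brody hyperbolic projective complex variety $X_{0,\CC}$, the pointed Hom schemes $\underline{\Hom}_{\CC}((C,c),(X_{0,\CC},x))$ are of finite type for every pointed smooth projective connected curve $(C,c)$ and every $x\in X_{0,\CC}(\CC)$; this follows from Urata's/Noguchi's rigidity results (see \cite[Theorem~5.1.6]{Kobayashi}, for example), or equivalently appears as geometric hyperbolicity in \cite[\S 4]{JKa}.

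The second step is to transfer $(1,1)$-boundedness from $\CC$ to an arbitrary algebraically closed overfield $K$ of $k_0$. By \cite[Theorem~7.2]{JKa}, the property of being $(1,1)$-bounded for a projective variety is invariant under arbitrary extensions of algebraically closed fields of characteristic zero. Applying this invariance first to the inclusion $k_0 \subset \CC$ (in the downward direction, after descending the finiteness data) and then to the inclusion $k_0 \subset K$, we obtain that $X_{0,K}$ is $(1,1)$-bounded over $K$ for every algebraically closed $K$ containing $k_0$. Now $(1,1)$-boundedness implies mild boundedness by \cite[Lemma~4.6]{JKa}, so $X_K \cong X_{0,K}$ is mildly bounded over $K$ for every algebraically closed $K$ with $k\subset K\subset L$.

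The third and final step is to invoke Theorem \ref{thm:mb}: since $X$ is arithmetically hyperbolic over $k$ by assumption and $X_K$ is mildly bounded over $K$ for every algebraically closed intermediate field $k\subset K \subset L$, we conclude that $X_L$ is arithmetically hyperbolic over $L$. The only nontrivial input is the implication \emph{Brody hyperbolic $\Rightarrow$ $(1,1)$-bounded} together with the field-extension invariance of $(1,1)$-boundedness; granted these two inputs (both cited above), the proof is a direct application of Theorem \ref{thm:mb}. The main conceptual step — and the place where the hypothesis involving the auxiliary embedding $k_0 \subset \CC$ is used essentially — is the transfer of geometric hyperbolicity from $\CC$ to the algebraically closed subfields of $L$ above $k$, since arithmetic hyperbolicity alone is not known to imply any of these boundedness notions (cf.\ Conjecture \ref{conj:mb}).
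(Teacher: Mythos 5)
Your proof is correct and arrives at the conclusion by the same overall strategy as the paper — reduce to the mild boundedness criterion (Theorem \ref{thm:mb}) via a boundedness notion from \cite{JKa} together with its field-extension invariance — but you take a slightly more direct path through the intermediate notions. The paper's proof first upgrades Brody hyperbolicity to Kobayashi hyperbolicity via Brody's lemma, then invokes \cite[Theorem~1.2]{JKa} to get algebraic hyperbolicity of $X_{0,\CC}$, transfers algebraic hyperbolicity across field extensions via \cite[Theorem~7.1]{JKa}, and finally hands everything off to Theorem \ref{thm:geometricity3} as a black box. You instead go directly from Brody hyperbolicity to $(1,1)$-boundedness (geometric hyperbolicity) via Urata/Noguchi rigidity, transfer that across field extensions via \cite[Theorem~7.2]{JKa}, pass to mild boundedness via \cite[Lemma~4.6]{JKa}, and then apply Theorem \ref{thm:mb}. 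In effect you unroll and shortcut the proof of Theorem \ref{thm:geometricity3}, skipping the detour through Demailly's degree bound; what this buys is that the classical analytic input is the more immediately usable finiteness of pointed holomorphic maps rather than the stronger genus-linear degree inequality. Both you and the paper are a bit loose about the fact that $k_0$ need not be algebraically closed while the boundedness notions and the transfer theorems in \cite{JKa} are stated over algebraically closed fields; the fix in both cases is to route through $\overline{k_0}$ (viewed inside both $\CC$ and $k$), which is harmless and worth a parenthetical remark.
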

\begin{proof}
Since $X_{0,\CC}$ is a projective Brody hyperbolic variety, it follows from Brody's lemma that $X_{0,\CC}$ is a Kobayashi hyperbolic variety \cite[Theorem~3.6.3]{Kobayashi}. In particular, as $X_{0,\CC}$ is a Kobayashi hyperbolic projective variety, it follows that $X_{0,\CC}$ is algebraically hyperbolic over $\CC$; see \cite[Theorem~1.2]{JKa}. It follows that $X_0$ is algebraically hyperbolic over $k_0$. However, by the compatibility of algebraic hyperbolicity with extensions of the base field \cite[Theorem~7.1]{JKa}, we conclude that $X_{0,k}\cong X$ is algebraically hyperbolic over $k$. Now, as $X$ is an algebraically hyperbolic and arithmetically hyperbolic projective variety over $k$, we conclude that $X_L$ is arithmetically hyperbolic (Theorem \ref{thm:geometricity3}).
\end{proof}
\begin{proof}[Proof of Theorem \ref{thm:geometricity_intro}] Let $X$ be a projective arithmetically hyperbolic variety over $k$ such that $X_{\CC}$ is Brody hyperbolic. It follows that $X_{\CC}$ is arithmetically hyperbolic over $\CC$ from Proposition \ref{prop:geometricity22} with $k_0 := k$ and $L:=\CC$.
\end{proof}


  \section{Torsion automorphism groups are finite}\label{section:tor_is_fin}
  The main result of this section says that, for a projective variety $X$ over the field $k$ (of characteristic zero), the automorphism group of $X$ is infinite if and only if it is non-torsion; see Theorem \ref{thm:tor_is_fin}. To prove this result, we will use basic facts about N\'eron-Severi groups, automorphisms preserving some fixed ample class, and $k$-points of positive-dimensional finite type group schemes over $k$. Presumably this result is ``well-known'' to experts, as the arguments we use   already appear   in the literature (in some form or another); see for instance  \cite{Cantat}, \cite[Corollary~6.1.7]{LittLesieutre}, or \cite{ZhangDeQi}.

  Let $S$ be a scheme and let $X\to S$ be a morphism. The functor $\mathrm{Aut}_{X/S}$ on the category of schemes over $S$ is defined  by $\mathrm{Aut}_{X/S}(T) = \mathrm{Aut}_T(X_T)$. We first use basic representability results for this functor to prove the following result.

   \begin{lemma}\label{lem:finiteness_of_aut} Let $k\subset L$ be an extension of algebraically closed fields. Let $X$ be a projective variety over $k$.
 If $\Aut_k(X)$ is finite, then $\Aut_L(X_L)$ is finite and $\Aut_k(X) = \Aut_L(X_L)$.
 \end{lemma}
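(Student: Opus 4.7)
The plan is to exploit representability of the automorphism functor together with the structure theory of finite type group schemes over an algebraically closed field of characteristic zero. Since $X$ is projective over $k$, a theorem of Grothendieck guarantees that $\mathrm{Aut}_{X/k}$ is representable by a group scheme $G$ locally of finite type over $k$. The formation of this representing scheme commutes with base change, so $G_L$ represents $\mathrm{Aut}_{X_L/L}$; in particular $G(k)=\Aut_k(X)$ and $G_L(L)=\Aut_L(X_L)$, and the lemma reduces to showing that finiteness of $G(k)$ forces $G_L(L)=G(k)$.

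Next I would assume $G(k)$ is finite and analyze the structure of $G$. By the paper's standing convention the characteristic is zero, so Cartier's theorem makes $G$ automatically smooth over $k$. The identity component $G^{0}$ is then a smooth, connected, finite type group scheme over $k$, and since $k$ is algebraically closed, such a group would have infinitely many $k$-points as soon as $\dim G^{0}>0$. Because $G^{0}(k)\subset G(k)$ is finite, we must have $\dim G^{0}=0$, and thus $G^{0}=\Spec k$. Each connected component of $G$ is a translate of $G^{0}$, so every component of $G$ is isomorphic to $\Spec k$ and automatically carries a $k$-point; the finiteness of $G(k)$ therefore forces $G$ to have only finitely many components. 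Consequently $G$ is a finite étale group scheme over $k$, and since $k$ is algebraically closed it is a constant group scheme.

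To finish, I would observe that base changing a finite constant group scheme from $k$ to $L$ gives the ``same'' constant group scheme over $L$, whose $L$-points are in canonical bijection with the $k$-points of the original. This yields
\[
\Aut_L(X_L)=G_L(L)=G(k)=\Aut_k(X),
\]
proving both assertions at once. The only nontrivial input is the representability of $\mathrm{Aut}_{X/k}$ by a group scheme locally of finite type when $X$ is projective; after that the argument is essentially formal structure theory, and the projectivity hypothesis is used exactly to license this representability result.
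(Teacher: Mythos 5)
Your proof is correct and follows essentially the same route as the paper's: both pass through the representability of $\mathrm{Aut}_{X/k}$ as a locally finite type group scheme, deduce from the finiteness of $k$-points that the group scheme is finite (hence constant, $k$ being algebraically closed of characteristic zero), and conclude by base change. You simply spell out in detail the structure-theoretic step (smoothness via Cartier, triviality of $G^0$, finitely many components) that the paper compresses into ``this implies that $\mathrm{Aut}_{X/k}$ is finite over $k$.''
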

 \begin{proof} 
 The  group scheme $\mathrm{Aut}_{X/k}$ is locally of finite type over $k$ and has only finitely many $k$-points.   This implies that $\mathrm{Aut}_{X/k}$ is finite over $k$.  Thus, $\mathrm{Aut}_{X_L/L} = \mathrm{Aut}_{X/k}\otimes_k L$ is finite over $L$, so that  $\Aut_L(X_L) = \Aut_{X_L/L}(L)$ is finite.  This proves the lemma. 
 \end{proof}
  
 For $X$ a proper   scheme over a field $k$, we let $\NS(X) $ be the N\'eron-Severi group of $X$, and we define $\NS(X)_{\QQ} := \NS(X)\otimes_\ZZ \QQ$.  If $L$ is a line bundle on $X$, we let $[L]$ denote the class of $L$ in $\NS(X)_{\QQ}$.
The following well-known proposition  says that, for $L$ an ample line bundle on a projective variety $X$ over $k$, the group of automorphisms of $X$ over $k$ which fix the class of $L$ in $\NS(X)_{\QQ}$ is the group of $k$-points on a finite type group scheme over $k$.

  \begin{proposition}\label{prop:rep} 
  Let $X$ be a projective    scheme over $k$, and let $L$ be an ample line bundle on $X$. The  functor  defined by
  \[
  (\mathrm{Sch}/k)^{\mathrm{op}} \to \mathrm{Groups},\]  \[ S \mapsto \{g \in \mathrm{Aut}_S(X_S) \ | \ \mathrm{for \ all \ geometric \ points}  \ \overline{s}\to S,  \ g_{\overline{s}}^\ast [L] = [L] \mathrm{\ in \ } \NS(X_{\overline{s}})_{\QQ} \}
  \] is representable by a finite type group scheme $\Aut_{X/k,[L]}$ over $k$.
  \end{proposition}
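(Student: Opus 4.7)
The plan is to represent the ambient automorphism functor first, identify $\Aut_{X/k,[L]}$ as an open-and-closed subgroup scheme of it, and finally bound it using a Hilbert scheme argument. For the first step, since $X$ is projective over $k$, Grothendieck's classical representability theorem gives that $\Aut_{X/k}$ is a group scheme $\mathbf{A}$ over $k$ that is locally of finite type; concretely, it is realized as an open subscheme of the Hilbert scheme of $X \times_k X$ via the graph embedding $g \mapsto \Gamma_g$. It then suffices to exhibit $\Aut_{X/k,[L]}$ as a finite-type open and closed subgroup scheme of $\mathbf{A}$.

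For the second step, let $p \colon X \times_k \mathbf{A} \to X$ be the projection, let $\mu \colon X \times_k \mathbf{A} \to X \times_k \mathbf{A}$ be the universal automorphism over $\mathbf{A}$, and form $M := \mu^\ast p^\ast L \otimes (p^\ast L)^{-1}$. On each geometric fibre $X_{\bar s}$ of the projection $X \times_k \mathbf{A} \to \mathbf{A}$, the line bundle $M_{\bar s}$ represents the class $g_{\bar s}^\ast[L] - [L]$ in $\NS(X_{\bar s})_{\QQ}$. After a suitable rigidification at a $k$-point of $X$, the line bundle $M$ defines a classifying morphism $\mathbf{A} \to \Pic_{X/k}$; composing with the quotient onto the finitely generated component group $\pi_0(\Pic_{X/k}) = \NS(X)$ (theorem of the base) yields a locally constant map $\mathbf{A} \to \NS(X)$. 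The preimage of $0$ is an open and closed subgroup scheme, and since $\NS(X_{\bar s}) = \NS(X)$ canonically for any geometric point over $k$ (because $k$ is algebraically closed of characteristic zero), this preimage represents the desired functor.

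For the third step, since $L$ is ample, so is $N := p_1^\ast L \otimes p_2^\ast L$ on $X \times_k X$. For any $g \in \Aut_{X/k,[L]}(S)$, the graph $\Gamma_g \subset X_S \times_S X_S$ pulls $N$ back to $L_S \otimes g^\ast L_S$ under the isomorphism $\Gamma_g \cong X_S$, which by hypothesis is numerically equivalent to $L^{\otimes 2}$ on every geometric fibre. Snapper's lemma therefore forces $\Gamma_g$ to have a fixed Hilbert polynomial $P(n) := \chi(X, L^{\otimes 2n})$ with respect to $N$, independent of $g$. Consequently, the graph map embeds $\Aut_{X/k,[L]}$ as a locally closed subscheme of the projective $k$-scheme $\mathrm{Hilb}^P(X \times_k X)$, yielding finite type. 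The main obstacle I anticipate is in step two: rigorously turning $g \mapsto g^\ast[L]$ into a locally constant morphism of schemes requires care with the representability and base-change behaviour of the Picard scheme, and relies on the theorem of the base in an essential way. Once this is in place, Snapper's lemma and the projectivity of Hilbert schemes complete the argument.
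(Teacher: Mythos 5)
The paper gives no argument for this proposition: it simply cites Zhang's Remark 2.6 and records that characteristic zero is not needed. Your proposal supplies the standard argument --- realize $\Aut_{X/k}$ as an open subscheme of the Hilbert scheme of $X\times_k X$ via graphs, cut out the class-preserving locus as an open-and-closed subgroup scheme by pushing the universal line bundle $M=\mu^\ast p^\ast L\otimes (p^\ast L)^{-1}$ into $\pi_0(\Pic_{X/k})=\NS(X)$, and then bound the graphs by Hilbert polynomial with Snapper's lemma --- and this blueprint is sound.

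One slip needs fixing. The functor in the proposition imposes $g_{\overline{s}}^\ast[L]=[L]$ in $\NS(X_{\overline{s}})_{\QQ}$, i.e.\ that $g_{\overline{s}}^\ast[L]-[L]$ be \emph{torsion}. The preimage of $0$ under your locally constant map $\mathbf{A}\to\NS(X)$ represents the a priori smaller functor requiring $g_{\overline{s}}^\ast[L]=[L]$ in $\NS(X_{\overline{s}})$ itself, so as written you represent the wrong subfunctor. Replace it by the preimage of the torsion subgroup $\NS(X)_{\mathrm{tors}}$: this is still open and closed, since the map is locally constant and, by the theorem of the base, $\NS(X)_{\mathrm{tors}}$ is finite. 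The Hilbert-polynomial step survives unchanged, because torsion classes in $\NS(X)$ are numerically trivial, so $L\otimes g^\ast L$ remains numerically equivalent to $L^{\otimes 2}$. It is also worth stating explicitly the form of Snapper's lemma you invoke: for a numerically trivial line bundle $N$ on the proper scheme $X$ and any line bundle $M$, the polynomial $t\mapsto\chi(X,M\otimes N^{t})$ is constant, so numerically equivalent line bundles on $X$ have equal Hilbert polynomials. With these two adjustments your proof is complete.
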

  \begin{proof} This is certainly well-known. A  proof of this is given in
    \cite[Remark~2.6]{Zhang}. (Note that we do not need that $k$ is of characteristic zero.)   
  \end{proof}

  We will also require the following simple group-theoretic lemma. It is essentially a   consequence of the fact that a homomorphism of groups $G\to H$ is trivial, provided $G$ is torsion and $H$ is torsion free.
  
 \begin{lemma}\label{lem:gr}
Let $G$ be a torsion group. Let $\Gamma$ be a finitely generated abelian group. Then, any morphism of groups $G\to \mathrm{Aut}(\Gamma)$ has finite image.
 \end{lemma}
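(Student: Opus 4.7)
The plan is to exploit the hint given just before the statement: find a finite-index torsion-free subgroup of $\Aut(\Gamma)$ and conclude that the image of any torsion group must be finite.

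First I would apply the structure theorem for finitely generated abelian groups to write $\Gamma \cong \ZZ^r \oplus T$, where $T$ is the (finite) torsion subgroup. Since $T$ is characteristic in $\Gamma$, every automorphism of $\Gamma$ preserves $T$, and hence induces an automorphism of $\Gamma/T \cong \ZZ^r$. This yields a natural homomorphism $\Aut(\Gamma) \to \GL_r(\ZZ) \times \Aut(T)$ whose kernel consists of automorphisms that act as the identity modulo $T$ and on $T$; one checks directly that this kernel is identified with $\Hom(\ZZ^r, T) = T^r$, which is finite. Thus $\Aut(\Gamma)$ is a finite extension of a subgroup of the finite group times $\GL_r(\ZZ)$, so up to passing to a finite-index subgroup the problem reduces to studying torsion subgroups of $\GL_r(\ZZ)$.

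Next I would invoke Minkowski's lemma: for any prime $p \geq 3$, the principal congruence subgroup $\Gamma(p) := \ker(\GL_r(\ZZ) \to \GL_r(\ZZ/p))$ is torsion-free and of finite index in $\GL_r(\ZZ)$. Pulling back $\Gamma(p)$ along the map $\Aut(\Gamma) \to \GL_r(\ZZ)$ and intersecting with the preimage of any fixed element of $\Aut(T)$, I obtain a finite-index torsion-free subgroup $H \subset \Aut(\Gamma)$ (for the torsion-free property, a direct matrix calculation using the block form above shows that any finite-order automorphism of $\Gamma$ projecting into $\Gamma(p)$ and acting trivially on $T$ must be the identity).

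Finally, given any homomorphism $\phi : G \to \Aut(\Gamma)$ with $G$ torsion, the intersection $\phi(G) \cap H$ is simultaneously torsion and torsion-free, hence trivial. The induced injection $\phi(G) \hookrightarrow \Aut(\Gamma)/H$ into a finite set then shows $\phi(G)$ is finite. The only step requiring any care is verifying the virtual torsion-freeness of $\Aut(\Gamma)$ in the presence of the torsion summand $T$; everything else is a routine assembly of standard facts.
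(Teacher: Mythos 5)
Your overall strategy is the same as the paper's: reduce to showing $\Aut(\Gamma)$ contains a finite\mbox{-}index torsion-free subgroup $H$, then observe that the (torsion) image $\phi(G)$ meets $H$ trivially and hence injects into the finite set $\Aut(\Gamma)/H$. You also usefully spell out the structure of $\Aut(\Gamma)$ via $\Gamma\cong\ZZ^r\oplus T$ and correctly identify the kernel of $\Aut(\Gamma)\to\GL_r(\ZZ)\times\Aut(T)$ with $\Hom(\ZZ^r,T)\cong T^r$; the paper compresses this into a single ``Therefore''. However, the verification you flag as ``requiring care'' is in fact where the argument breaks.

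The claim that any finite-order automorphism of $\Gamma$ projecting into $\Gamma(p)$ and acting trivially on $T$ must be the identity is false, and consequently the $H$ you build (the preimage of $\Gamma(p)\times\{1\}$) is \emph{not} torsion-free in general. The obstruction is exactly the finite kernel $K\cong\Hom(\ZZ^r,T)$ you computed: every element of $K$ projects to the identity in $\GL_r(\ZZ)\times\Aut(T)$ and has finite order, yet $K$ is nontrivial whenever $r\geq 1$ and $T\neq 0$. For instance, with $\Gamma=\ZZ\oplus\ZZ/2$ the automorphism $(z,t)\mapsto(z,\,\overline{z}+t)$ (where $\overline{z}$ denotes $z\bmod 2$) has order $2$, restricts to the identity on $T$, and maps to $1\in\GL_1(\ZZ)\subset\Gamma(p)$; in block form it is $\left(\begin{smallmatrix}1&0\\ C&1\end{smallmatrix}\right)$ with $C\neq 0$, and $A\in\Gamma(p)$ together with $D=\mathrm{id}_T$ puts no constraint on $C$. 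The fix is easy and uses the splitting you already chose: the decomposition $\Gamma\cong\ZZ^r\oplus T$ furnishes a \emph{section} $s\colon\GL_r(\ZZ)\times\Aut(T)\hookrightarrow\Aut(\Gamma)$, $(A,\tau)\mapsto\bigl((z,t)\mapsto(Az,\tau t)\bigr)$, so you should take $H:=s(\Gamma(p)\times\{1\})$ rather than the preimage. This $H$ is isomorphic to $\Gamma(p)$, hence torsion-free by Minkowski, and has finite index because $K$, $\GL_r(\ZZ)/\Gamma(p)$ and $\Aut(T)$ are all finite; normality of $H$ is not actually needed for the final step (or can be arranged by intersecting the finitely many conjugates). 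With that repair the proof goes through and matches the paper's.
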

 \begin{proof}  Let $G\to \mathrm{Aut}(\Gamma)$ be a morphism and let  $G'$ be its image. Note that $G'$ is a torsion subgroup of $\mathrm{Aut}(\Gamma)$.
 
Recall that, for all positive integers $n$, the group $\mathrm{GL}_n(\ZZ)$ has a normal torsion-free       subgroup of finite index (e.g., the subgroup of matrices which are congruent to the identity modulo $3$).
Therefore, 
 the group   $\mathrm{Aut}(\Gamma)$ has a torsion-free normal finite index subgroup, say $H$. 

Consider the morphism $G'\subset \mathrm{Aut}(\Gamma)\to \mathrm{Aut}(\Gamma)/H$. The kernel of this morphism is $G'\cap H$. Since $H$ is torsion-free and $G'$ is torsion, we see that $G'\cap H$ is trivial. Since the index of  $G'\cap H$ in $G$ is bounded by the index of $H$ in $\mathrm{Aut}(\Gamma)$, we see that $G'\cap H$ has finite index in $G'$. Thus, as  $G'\cap H$ is trivial and of finite index in $G'$, we conclude that $G' = \mathrm{Im}[G\to \Aut(\Gamma)]$ is finite.
 \end{proof}
 
 We now show that  positive-dimensional algebraic groups over algebraically closed fields of characteristic zero have elements of infinite order. This is a non-trivial fact when $k$ is countable.
  \begin{lemma}\label{lem:ft_tor_gr_is_fin} Let $k$ be an algebraically closed field of characteristic zero.
  Let $G$ be a finite type group scheme over $k$ such that $G(k)$ is torsion. Then $G$ is finite.
  \end{lemma}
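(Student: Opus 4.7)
The plan is to argue by contradiction: assume $G$ is not finite and produce an element of $G(k)$ of infinite order. Since $G$ is of finite type over $k$, it has only finitely many connected components, so after replacing $G$ with its identity component $G^\circ$ (which still has $G^\circ(k)$ torsion), I may assume that $G$ is connected of positive dimension; Cartier's theorem ensures moreover that $G$ is smooth, since $\mathrm{char}(k)=0$.

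I would then apply Chevalley's structure theorem to obtain a short exact sequence of algebraic groups $1\to L\to G\to A\to 1$, where $L$ is a connected linear algebraic group and $A$ is an abelian variety. Since $\dim G>0$, either $\dim A>0$ or $\dim L>0$, and I shall handle these two cases separately.

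Suppose first that $\dim A>0$. Here I would invoke the paper's own Corollary \ref{cor:pot_den0} applied to $A$ to obtain a finitely generated subfield $k_0\subset k$ and an abelian variety $\mathcal{A}$ over $k_0$ with $\mathcal{A}_k\cong A$ such that $\mathcal{A}(k_0)$ is Zariski-dense in $A$. By N\'eron--Lang \cite[Cor.~7.2]{ConradTrace}, $\mathcal{A}(k_0)$ is a finitely generated abelian group; being Zariski-dense in the positive-dimensional variety $A$ forces it to be infinite, hence of positive rank, and therefore to contain an element $\bar{x}\in A(k)$ of infinite order. Since $k$ is algebraically closed and $L$ is smooth, every $L$-torsor over $\Spec k$ is trivial, so the map $G(k)\to A(k)$ is surjective; any lift $x\in G(k)$ of $\bar{x}$ must again have infinite order, contradicting our hypothesis.

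Suppose next that $\dim L>0$. Then $L$ is a positive-dimensional connected linear algebraic group over the algebraically closed field $k$, and standard structure theory shows that $L$ contains a closed subgroup isomorphic to either $\Ga$ or $\Gm$. Since $\mathrm{char}(k)=0$, the additive group $\Ga(k)=k$ is torsion-free, and $\Gm(k)=k^{\times}$ contains e.g.\ $2\in\QQ^{\times}\subset k^{\times}$ as an element of infinite order. Either way, $G(k)$ contains an element of infinite order, the desired contradiction. I expect the abelian-variety case to be the only subtle point: one must rule out that $A(k)$ is torsion even when $k$ is countable, and this is exactly where Corollary \ref{cor:pot_den0} (ultimately relying on Frey--Jarden's rank jumping) is essential; the linear case is classical.
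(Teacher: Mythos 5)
Your proof is correct, and the core ingredients are the same as the paper's: Cartier's theorem for smoothness, Chevalley's structure theorem, the fact that $\Ga(k)$ and $\Gm(k)$ are non-torsion in characteristic zero, and Frey--Jarden for the abelian part. The two arguments differ slightly in organization. The paper first argues that the linear kernel $H$ must be trivial (else $H(k)$ contains non-torsion elements, contradicting $G(k)$ torsion), concludes that $G$ is an abelian variety, and then quotes Lemma~\ref{lem:frey} directly to produce a point of infinite order. You instead do a case split on which of the two Chevalley factors is positive-dimensional and, in the abelian case, route through Corollary~\ref{cor:pot_den0} together with N\'eron--Lang and a Zariski-density count, then lift along $G(k)\twoheadrightarrow A(k)$ (using $H^1(k,L)=0$ for $L$ smooth connected over the algebraically closed $k$). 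The lift argument is a small extra step that the paper avoids, but it is correct: a preimage of an infinite-order element must itself have infinite order. Both formulations ultimately rest on the same rank-jumping input from Frey--Jarden, since Corollary~\ref{cor:pot_den0} is itself a consequence of Lemma~\ref{lem:frey}; the paper's version is marginally more streamlined, yours is equally valid.
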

  \begin{proof} We may and do assume that $G$ is connected.   Since $k$ is of characteristic zero, by Cartier's theorem \cite[Tag~047N]{stacks-project}, the group scheme $G$ is smooth. Thus, $G$ is an ``algebraic group over $k$'' in the sense of \cite{ConradChevalley}.
 By Chevalley's theorem \cite[Theorem~1.1]{ConradChevalley}, there is a  unique  normal affine     connected linear algebraic subgroup $H$ in $G$ such that $G/H$ is an abelian variety. If $H$ is non-trivial, then $H$ contains either  $\mathbb{G}_{a,k}$ or  $\mathbb{G}_{m,k}$ as a subgroup. Since $k$ is of characteristic zero, the group $\mathbb{G}_a(k)$ is not torsion, and the group $\mathbb{G}_m(k)$ is not torsion. Thus, if $H$ is non-trivial, then $H(k)$ contains non-torsion elements. Therefore, as $G(k)$ is torsion, it follows that $H$ is trivial, so that $G$ is an abelian variety (by the defining property of $H$).  However, as $k$ is of characteristic zero, if $G$ is a positive-dimensional abelian variety over $k$, then $G(k)$ contains a point of infinite order by Frey-Jarden's theorem (Lemma \ref{lem:frey}). Therefore, we conclude that $G$ is the trivial group, as required.
  \end{proof}

 \begin{remark}
 Note that, if $L$ is a field of characteristic $p>0$, then $\mathbb{G}_{a,L}$ is a positive-dimensional (non-finite) group scheme over $L$, and  $\mathbb{G}_a(L) = (L,+)$ is an   abelian $p$-torsion group. Thus, Lemma \ref{lem:ft_tor_gr_is_fin} is false over any algebraically closed field $L$ of positive characteristic.  
 \end{remark}

We are now ready to prove the ``criterion'' for finiteness of the automorphism group of a projective variety over an algebraically closed field of characteristic zero.

 \begin{proof}[Proof of Theorem \ref{thm:tor_is_fin}] Let $X$ be a projective variety over $k$ such that $\Aut_k(X)$ is a torsion group.    Since $\Gamma:=\NS(X)$ is  a finitely generated abelian group (see for instance \cite[Theorem~8.4.7]{BLR}), it follows from Lemma \ref{lem:gr} that there is a finite index subgroup $H\subset \Aut_k(X)$ which acts  trivially on   $\Gamma = \NS(X)$.  Let $L$ be an ample line bundle on $X$, and note that $H$ fixes the class of $L$ in $\NS(X)_{\QQ}$.  By Proposition \ref{prop:rep}, the group of automorphisms which leave the class of $L$ fixed in 
 $\NS(X)_{\QQ}$ is representable by a finite type group scheme  $G:=\mathrm{Aut}_{X/k,[L]}$.  Note that $$H\subset G(k)\subset \Aut_k(X).$$ In particular, as $\Aut_k(X)$ is torsion (by assumption), it follows that $G(k)$ is torsion. Thus, by Lemma \ref{lem:ft_tor_gr_is_fin},  the finite type group scheme $G$ is finite over $k$. As $H \subset G(k)$,
  we see that $H $ is finite. Since $H$ is of finite index in $\Aut_k(X)$, we conclude that $\Aut_k(X)$ is finite.
 \end{proof}
 
  \begin{remark}
The analogue of  Theorem \ref{thm:tor_is_fin} is false for projective varieties over $\overline{\mathbb{F}_p}$.   Indeed, let $X$ be a smooth proper connected curve of genus one over $K=\overline{\mathbb{F}_p}$. Then, the automorphism group $\Aut_{K}(X)$   is torsion and infinite. 
\end{remark}

\begin{remark} The analogue of Theorem \ref{thm:tor_is_fin} fails over \emph{any} algebraically closed field of positive characteristic. Indeed,  at the bottom of page 10 in \cite{Brion1}, Brion constructs a smooth projective surface $S$ over $k$ such that $\Aut^0_{S/k}=\mathbb{G}_{a,k}$. In particular, for this surface $S$, the group $\Aut^0_{S/k}(k) = \mathbb{G}_a(k)$ is infinite and torsion. 
  \end{remark}

 \begin{remark}
 Theorem \ref{thm:tor_is_fin} confirms that, if $X$ is a projective variety over $k$ and $\Aut_k(X)$ is torsion, then $\Aut_k(X)$ is \emph{\textbf{finite}}. We stress that this is \emph{\textbf{not}} a consequence of Bass--Lubotzky's theorem which in this case ``only'' says that every finitely generated subgroup of $\Aut_k(X)$ is finite (i.e., $\Aut_k(X)$ is locally finite); see \cite{Bass}. (It also seems worthwhile stressing that there are smooth projective varieties over $\CC$ such that $\Aut_{\CC}(X)$ is a discrete non-finitely generated group; see Remark \ref{remark:les}.)
 \end{remark}
 
As an application of Theorem \ref{thm:tor_is_fin}, we now prove the following more general result.

 \begin{corollary} \label{cor:exis_of_aut0} Let $k\subset L$ be an extension of algebraically closed fields of characteristic zero.
 Let $X$ be a projective variety over $k$. If $\Aut_k(X)$ is torsion, then $\Aut_L(X_L)$ is finite.  
 \end{corollary}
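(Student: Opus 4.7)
The plan is to chain together the two principal tools already established in this section: Theorem \ref{thm:tor_is_fin} (torsion $\Rightarrow$ finite for $\Aut_k(X)$ when $X$ is projective over $k$) and Lemma \ref{lem:finiteness_of_aut} (finiteness of automorphisms is preserved, and the groups agree, under base change from $k$ to $L$).

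Concretely, first I would invoke Theorem \ref{thm:tor_is_fin}: since $X$ is a projective variety over the algebraically closed characteristic-zero field $k$ and $\Aut_k(X)$ is assumed to be torsion, we conclude that $\Aut_k(X)$ is actually finite. Next, I would apply Lemma \ref{lem:finiteness_of_aut} to the base change $k \subset L$: the hypothesis of that lemma---finiteness of $\Aut_k(X)$---is now satisfied, so the lemma yields that $\Aut_L(X_L)$ is finite (and, incidentally, equal to $\Aut_k(X)$).

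I do not anticipate any obstacle beyond verifying the hypotheses of the two cited results. The genuine content has already been absorbed into Theorem \ref{thm:tor_is_fin} (whose proof required the theorem of the base, representability of $\Aut_{X/k,[L]}$, and the nonexistence of positive-dimensional torsion algebraic groups in characteristic zero via Frey--Jarden) and into Lemma \ref{lem:finiteness_of_aut} (which uses that a locally finite type group scheme with finitely many $k$-points over an algebraically closed field is finite, and that finiteness is preserved under base change). Thus the corollary follows by a direct two-step application, without any further calculation.
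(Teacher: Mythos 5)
Your proposal matches the paper's proof exactly: apply Theorem \ref{thm:tor_is_fin} to conclude $\Aut_k(X)$ is finite, then apply Lemma \ref{lem:finiteness_of_aut} to conclude $\Aut_L(X_L)$ is finite. Nothing to add.
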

 \begin{proof} 
  By Theorem \ref{thm:tor_is_fin}, the group $\Aut_k(X)$ is finite. Thus, by Lemma \ref{lem:finiteness_of_aut}, the group $\Aut_L(X_L)$ is finite. This proves the corollary.  
 \end{proof}

 \begin{corollary} \label{cor:exis_of_aut} Let $k\subset L$ be an extension of algebraically closed fields of characteristic zero.
 Let $X$ be a projective variety over $k$. Then $X$ has an automorphism of infinite order if and only if $X_L$ has an automorphism of infinite order.
 \end{corollary}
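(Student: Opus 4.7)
The forward implication is immediate: if $f \in \Aut_k(X)$ has infinite order, then its base change $f_L \in \Aut_L(X_L)$ also has infinite order, since the natural map $\Aut_k(X) \to \Aut_L(X_L)$ is a homomorphism of groups (indeed, $(f_L)^n = (f^n)_L$, and $(f^n)_L = \mathrm{id}_{X_L}$ forces $f^n = \mathrm{id}_X$ by faithful flatness of $\Spec L \to \Spec k$).

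For the converse, we argue by contraposition. Suppose $X$ has no automorphism of infinite order, i.e., $\Aut_k(X)$ is a torsion group. Then Corollary \ref{cor:exis_of_aut0} applies directly and shows that $\Aut_L(X_L)$ is finite. In particular, every element of $\Aut_L(X_L)$ has finite order, so $X_L$ has no automorphism of infinite order. This completes the proof.
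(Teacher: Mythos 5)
Your proof is correct and takes essentially the same approach as the paper: the forward direction follows from the injectivity of base change on automorphism groups, and the converse is proved by contraposition via Corollary \ref{cor:exis_of_aut0}. The only difference is that you spell out the faithful-flatness justification for the forward implication, which the paper leaves implicit.
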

 \begin{proof}
 If $X$ has an automorphism of infinite order, then $X_L$ has an automorphism of infinite order. Thus, to prove the corollary, 
 suppose that $X$ has no automorphism of infinite order. Then $\Aut_k(X)$ is torsion, so that $\Aut_L(X_L)$ is finite (Corollary \ref{cor:exis_of_aut0}).  Therefore, the group $\Aut_L(X_L)$   has no element of infinite order. This proves the corollary.  
 \end{proof}
 
 \begin{remark}  
 Corollary \ref{cor:exis_of_aut} is false in positive characteristic. Indeed, let $E$  be a smooth proper connected genus one curve over $k:=\overline{\mathbb{F}_p}$. Then $E$ has no automorphisms of infinite order over $k$. Let $0$ be an element of $E(k)$, and let $L$ be an uncountable algebraically closed field containing $k$. Then the elliptic curve $(E,0)$ over $k$   has an $L$-point of infinite order, say $x$. Translation by $x$ is an infinite order automorphism of the $L$-scheme $E_L$.  
  \end{remark}

 \begin{remark}\label{remark:les} For all $n\geq 2$, 
 there exists a smooth projective simply connected $n$-dimensional variety $X$ over $\CC$ such that $\mathrm{Aut}^0_{X/\CC}$ is trivial and $\Aut_{\CC}(X)$ is a non-finitely generated (infinite, non-torsion) group; see \cite{Lesieutre}. Note that the arguments  and ideas   in \emph{loc. cit.} are different from those used in our proof of Theorem \ref{thm:tor_is_fin}.
 \end{remark}

 \section{Endomorphisms of arithmetically hyperbolic varieties}\label{section:ends}
   If $f:X\to X$ is an endomorphism of a variety over $k$ and $i\geq 1$, we let $f^i:X\to X$ be the composition of $f$ with itself $i$-times. We let $f^0:= \mathrm{id}_X$. Also, for $f:X\to X$ an endomorphism and $x\in X(k)$, we define $O_f(x) := \{f^k(x)\}_{k\geq 0} $. We will refer to $O_f(x)$ as the  \emph{(forward) $f$-orbit of $x$}.
 
 The crucial ``arithmetic'' observation is the following (very) simple lemma.
%
%
%
 
 \begin{lemma}\label{lem:orbs_are_fin}
 Let $X$ be an arithmetically hyperbolic variety over $k$. Let $f \in \End(X)$  and let $x\in X(k)$. Then the $f$-orbit $O_f(X)$ of $x$ is finite.
 \end{lemma}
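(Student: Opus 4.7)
The plan is to exploit the definition of arithmetic hyperbolicity: it suffices to spread $X$, the endomorphism $f$, and the point $x$ all out to a common $\ZZ$-finitely generated subring $A \subset k$, and then observe that the entire forward orbit lies inside the finite set $\mathcal{X}(A)$.

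More concretely, I would first choose a $\ZZ$-finitely generated subring $A_0 \subset k$ and a finite type separated model $\mathcal{X}_0 \to \Spec A_0$ with $\mathcal{X}_{0,k}\cong X$. By the standard limit/approximation formalism for finite type morphisms of finite type $k$-schemes, after enlarging $A_0$ to a larger $\ZZ$-finitely generated subring $A_1 \subset k$, the endomorphism $f \colon X \to X$ descends to an endomorphism $F \colon \mathcal{X}_1 \to \mathcal{X}_1$ of the pulled-back model $\mathcal{X}_1 := \mathcal{X}_0 \otimes_{A_0} A_1$. Similarly, after possibly enlarging $A_1$ further to a $\ZZ$-finitely generated subring $A \subset k$, the $k$-point $x \in X(k)$ extends to an $A$-section $\xi \in \mathcal{X}(A)$ of the resulting model $\mathcal{X} := \mathcal{X}_1 \otimes_{A_1} A$, with $\xi_k = x$ in $X(k)$.

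Now for every integer $i \geq 0$, the composition $F^i \circ \xi \colon \Spec A \to \mathcal{X}$ is an element of $\mathcal{X}(A)$, and its base change to $k$ is precisely $f^i(x) \in X(k)$. Hence $O_f(x) = \{f^i(x)\}_{i\geq 0}$ is contained in the image of the natural map $\mathcal{X}(A) \to X(k)$. Since $X$ is arithmetically hyperbolic over $k$ and $\mathcal{X}$ is a finite type separated model of $X$ over the $\ZZ$-finitely generated subring $A\subset k$, the set $\mathcal{X}(A)$ is finite. Therefore $O_f(x)$ is finite, as desired.

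There is really no substantive obstacle here; the only thing to be careful about is the descent step for $f$ and $x$, which is a routine application of the fact that morphisms and sections of finite type schemes over $k$ are always defined over some $\ZZ$-finitely generated subring of $k$.
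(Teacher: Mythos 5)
Your proposal is correct and follows essentially the same route as the paper's own proof: spread out the model, the endomorphism, and the base point to a common $\ZZ$-finitely generated subring $A\subset k$, note that the entire forward orbit lies in $\mathcal{X}(A)$, and conclude by the finiteness guaranteed by arithmetic hyperbolicity. The only cosmetic difference is that you carry out the spreading-out in three explicit stages and make the identification $F^i\circ\xi \leftrightarrow f^i(x)$ explicit, whereas the paper does this more tersely in two steps.
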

 \begin{proof}    Choose a $\ZZ$-finitely generated subring $A\subset k$ and a finite type separated model $\mathcal{X}$ over $A$ such that $f$ descends to an endomorphism  $F:\mathcal{X}\to \mathcal{X}$ of $\mathcal{X}$ over $A$; such data exists by standard spreading out arguments. Moreover,  choose a $\ZZ$-finitely generated subring  $ B\subset k$   containing $A$ such that $x$ lies in the subset $\mathcal{X}(B)$ of $X(k)$.   Now,   the $f$-orbit $O_f(x)\subset X(k)$ of $x$ is contained  in the subset $\mathcal{X}(B)$.   Since   $X$ is arithmetically hyperbolic over $k$, the set $\mathcal{X}(B)$ is finite, so that  $O_f(x)$ is finite.
 \end{proof}

For $X$ a variety over $k$,  recall that a dominant endomorphism $f:X\to X$ has \emph{finite order} if there exist pairwise distinct positive integers $n$ and $m$ such that $f^n = f^m$.  
 
In \cite{Amerik2011}  Amerik proved that dominant endomorphisms    which are not of finite order have points of infinite order. The methods of Amerik are inspired by the work of many authors on dynamical systems of varieties over number fields; see for instance  
  \cite{BellBook}. We will require a mild generalization of Amerik's theorem in which we allow the base field to be an   algebraically closed field of characteristic zero (which is not necessarily  $\Qbar$). This version is well-known to experts to follow from Amerik's line of reasoning. At the request of the referee, we include a brief explanation below.

\begin{theorem} [Amerik]\label{thm:Amerik} Let $X$ be a variety over $k$, and let $f:X\to X$ be a  dominant morphism. If the    orbit of every point $x$ in $X(k)$ is finite, then $f$ has finite order.  
\end{theorem}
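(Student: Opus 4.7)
The plan is to prove the contrapositive: assuming $f$ is not of finite order, I will exhibit a point $x \in X(k)$ with infinite $f$-orbit, contradicting the hypothesis. The first step is a standard spreading-out. Choose a $\ZZ$-finitely generated subring $A \subset k$ with fraction field $K$, a finite type model $\mathcal{X}$ of $X$ over $A$, and a dominant endomorphism $F \colon \mathcal{X} \to \mathcal{X}$ descending $f$. Since $A \hookrightarrow k$ (and hence $K \hookrightarrow k$) is faithfully flat, the hypothesis $f^n \neq f^m$ for all distinct $n, m \geq 0$ descends to $F_K^n \neq F_K^m$, so $F_K$ is a dominant endomorphism of the variety $\mathcal{X}_K$ over the finitely generated characteristic-zero field $K$, and $F_K$ is not of finite order.

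The second step is to produce an infinite-orbit point for $F_K$ over $\overline{K}$. Amerik's original theorem in \cite{Amerik2011} is stated over $\Qbar$, but her $p$-adic analytic argument works over any field admitting a suitable non-archimedean place; since $K$ is finitely generated over $\QQ$, it embeds into $\mathbb{C}_p$ for an appropriate prime $p$, which supplies such a place. Granting this mild extension of her theorem, I obtain a point $y \in \mathcal{X}_K(\overline{K})$ whose $F_K$-orbit is infinite.

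Finally, I transport $y$ to $X(k)$. Since $k$ is algebraically closed of characteristic zero and contains $K$, I fix an embedding $\overline{K} \hookrightarrow k$ extending $K \hookrightarrow k$. Under the resulting injection $\mathcal{X}_K(\overline{K}) \hookrightarrow \mathcal{X}_K(k) = X(k)$, the $F_K$-orbit of $y$ maps bijectively onto the $f$-orbit of its image $x \in X(k)$, which is therefore infinite; this yields the required contradiction.

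The hard part is the second step, namely extending Amerik's theorem from $\Qbar$ to arbitrary finitely generated fields of characteristic zero. An alternative route would be to specialize the spread-out family to a sufficiently generic $\Qbar$-point $\phi \colon A \to \Qbar$, apply Amerik directly to the $\Qbar$-variety $X_\phi$ (verifying that dominance and non-finite-order are preserved by $\phi$), and then lift the resulting infinite-orbit point from the special fiber back to the generic one; in that variant the principal obstacle shifts to the lifting step, which would require Hensel-type arguments on the smooth locus of $\mathcal{X} \to \Spec A$ together with some control over how orbits deform under specialization.
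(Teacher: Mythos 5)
Your overall strategy — spread out over a finitely generated subring $A \subset k$ with fraction field $K$, run a $p$-adic version of Amerik's argument over $K$, and lift the resulting infinite-orbit point from $\overline{K}$ into $k$ — is the same as the paper's. However, the device you propose to ``supply a suitable non-archimedean place'' does not actually deliver what Amerik's proof needs. An embedding $K \hookrightarrow \CC_p$ induces a place whose residue field sits inside $\overline{\FF_p}$, and for a generic embedding this residue field will genuinely be infinite; $\CC_p$ itself has infinite residue field. But Amerik's argument requires a place with \emph{finite} residue field: the reduction modulo $\mathfrak{p}$ and pigeonhole step that forces a periodic residue, the Bell--Ghioca--Tucker-style $p$-adic analytic uniformization on a residue disc, and the appeal to Hrushovski's theorem on intersections of graphs with Frobenius all reduce to a variety over a finite field. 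The tool the paper actually invokes, and the one your second step needs, is Cassels's embedding theorem: a $\ZZ$-finitely generated integral domain $A$ of characteristic zero embeds into $\ZZ_p$ for infinitely many primes $p$. This places $A$ inside the ring of integers of a $p$-adic local field, so that $A$-sections of your model $\mathcal{X}$ become integral points reducing into a finite residue field — exactly the setting in which Amerik's $\Qbar$-argument transfers. With Cassels's theorem substituted for the bare $\CC_p$-embedding, your outline coincides with the paper's proof; the alternative specialize-and-lift route you sketch at the end is not the route the paper takes, and you correctly flag that the lifting step there would be the sticking point.
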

\begin{proof}
If $k$ is uncountable, this is ``obvious''. 
If $k=\Qbar$ this is  proven by Amerik \cite[Corollary~9]{Amerik2011}.  The arguments  in Amerik can be used (with minor modifications) to prove the theorem, as we explain now. 

Firstly, by standard ``spreading out'' arguments, we may and do choose the following data.
\begin{enumerate}
\item  A $\ZZ$-finitely generated subring  $A\subset k$;
\item  A finite type separated model $\mathcal{X}$ for   of  $X$ over $A$;
\item A morphism of schemes    $\tilde{f}:\mathcal{X}\to \mathcal{X}$  with $\tilde{f}_k = f$;
\item A prime number $p$, a finite extension $K$ of $\QQ_p $   with ring of integers $\OO_K$, and an embedding $A \subset \OO_K$;
\item A maximal ideal  $\mathfrak{p}\subset \OO_K$;
\item A section  $x\in \mathcal{X}(A)\subset \mathcal{X}(\OO_K)\subset \mathcal{X}(K)$;
\item A dense open affine subscheme  $\mathcal{U}\subset \mathcal{X}$  containing $x$ and a finite surjective morphism of schemes $\mathcal{U}\to \mathbb{A}^n_A$   with $\mathcal{U} = \Spec A[x_1,\ldots,x_n,x_{n+1},\ldots,x_{m}]/I$.
\end{enumerate}
Replacing $A$ by a $\ZZ$-finitely generated, $p$ by a larger prime number if necessary and $K$ by a finite field extension if necessary, we may and do assume that the above data satisfies the following properties.
\begin{enumerate}
\item  every point in the orbit of $x_{\mathfrak{p}}$ is smooth on $\mathcal{X}_{\mathfrak{p}}$ and the orbit of $x_{\mathfrak{p}}$ is  disjoint from the ramification locus of $\tilde{f}_{\mathfrak{p}}$.
\item The coefficients of the power series $x_{n+1},\ldots, x_m, \tilde{f}^\ast x_1,\ldots, \tilde{f}^\ast x_m$ lie in $A$ (when considered as power series in $x_1,\ldots,x_n$).
\item  For all $n+1 \leq i \leq m$, the (monic) minimal polynomial $P_i$ of $x_{i}$ over $K[x_1,\ldots,x_n]$  has coefficients in $A$ and the derivative $P_i'$ of $P_i$ is not identically zero modulo $\mathfrak{p}$.  
\end{enumerate}
 To construct this data,   use   Cassels's embedding theorem to find an embedding of the finitely generated subring $A\subset k$ into a $p$-adic field  (which is arguably the only  ``additional'' ingredient necessary to adapt Amerik's arguments) and, following Amerik, Hrushovski's theorem on intersections of graphs with Frobenius \cite[Corollary~2]{Amerik2011} (which relies on \cite{H, Varshavsky}).     
 
 In the rest of the proof we follow Amerik. Thus, define $\mathcal{N}_{p,x}$ to be  
 \[
 \mathcal{N}_{\mathfrak{p},x} := \{t \in U(K) \ | \ x_i(t) \equiv x_i(x) \ \mathrm{for \ } 1\leq i\leq m\}.
 \]
Now, the $p$-adic ``uniformisation'' theorem of Bell--Ghioca--Tucker as proven by Amerik (see \cite[Proposition~3]{Amerik2011}) implies that the following holds. There is an integer $\ell \geq 1$ and an integer $N\geq 1$ such  that $f^\ell$ maps $\mathcal{N}_{p,x}$ into itself, every preperiodic point in $\mathcal{N}_{p,x}$ has order at most $N$, and the  subset $X(k)\cap \mathcal{N}_{\mathfrak{p},x}$ is dense  in $\mathcal{N}_{p,x}$. This implies the result by \cite[Corollary~8]{Amerik2011}.    
\end{proof}

 \begin{lemma}\label{lem:periodic_is_aut}
 Let $X$ be a variety over $k$. Let $f:X\to X$ be a dominant endomorphism. Suppose that there are distinct positive integers $n$ and $m$     such that $f^n =f^m$. Then $f^{\vert n-m\vert } = \mathrm{id}_X$.   
 \end{lemma}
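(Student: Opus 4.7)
My plan is as follows. Without loss of generality assume $n>m$, and set $d := n-m$; the goal is to show that $g := f^d$ equals $\mathrm{id}_X$.

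First I would reduce the problem to a statement on function fields. Since $X$ is integral (by our convention on varieties) and $f: X\to X$ is dominant, $f$ sends the generic point $\eta$ of $X$ to itself and induces an injective field homomorphism $f^{\#}: k(X)\to k(X)$. By functoriality of pullback, $(f^i)^{\#} = (f^{\#})^i$ for every $i\geq 0$. The hypothesis $f^n = f^m$ therefore gives $(f^{\#})^n = (f^{\#})^m$ as endomorphisms of $k(X)$. Since $(f^{\#})^m$ is an injection of fields, we may cancel it to obtain $(f^{\#})^d = g^{\#} = \mathrm{id}_{k(X)}$.

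Next I would upgrade this to the statement that $g$ agrees with $\mathrm{id}_X$ on a dense open subset. Since $g^{\#}$ is the identity on the residue field at the generic point $\eta$, and $g(\eta)=\eta$, the restriction $g|_\eta$ equals the identity on $\eta$. By a standard spreading-out argument (passing to an affine open of $X$, where the statement is just that a ring homomorphism $A\to A$ whose extension to $\mathrm{Frac}(A)$ is the identity must already be the identity on $A$), the two morphisms $g$ and $\mathrm{id}_X$ agree on a dense open subset $U\subset X$.

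Finally I would extend this agreement to all of $X$ by a standard separatedness argument. Since $X$ is separated, the diagonal $\Delta_X \subset X\times_k X$ is a closed subscheme, so the equalizer of $g$ and $\mathrm{id}_X$, namely the preimage of $\Delta_X$ under $(g,\mathrm{id}_X): X\to X\times_k X$, is a closed subscheme $Z\subset X$. We have shown $U\subset Z$, so $Z$ is dense; being closed, its underlying topological space is all of $X$. Since $X$ is integral, hence reduced, we conclude $Z = X$ as schemes, i.e.\ $g = \mathrm{id}_X$.

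The only point requiring any care is keeping scheme-theoretic (rather than merely set-theoretic) equalities in play at the last step, which is why separatedness and reducedness of $X$ enter; everything else is formal manipulation in the function field.
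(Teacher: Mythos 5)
Your proof is correct, and it takes a genuinely different route from the paper's. The paper proceeds entirely at the level of morphisms and points: it chooses an auxiliary integer $\ell \geq 2$ with $(\ell-1)n - \ell m \geq 0$, sets $G := f^{(n-m)(\ell-1)}$, and verifies by a telescoping computation with the relation $f^n = f^m$ that $g \circ G = G$, so that the fixed locus $X^g$ (closed by separatedness) contains the dense image of the dominant morphism $G$ and hence equals $X$. You instead pass to the function field: since $f$ is dominant and $X$ is integral, $(f^{\#})^n = (f^{\#})^m$ as injective endomorphisms of $k(X)$, and left-cancelling $(f^{\#})^m$ gives $g^{\#} = \mathrm{id}_{k(X)}$ directly, after which you descend this to an honest equality of morphisms via separatedness and reducedness of $X$. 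Your approach avoids the auxiliary exponent and the bookkeeping computation, trading it for the observation that an injective self-map of a field is cancellable; this is arguably the more conceptual route. One small imprecision in your spreading-out step: an affine open $U = \Spec A$ need not be $g$-stable, so the statement is not literally about a ring endomorphism $A \to A$. Instead one should take an affine open $V \subset U \cap g^{-1}(U)$; then $g|_V \colon V \to U$ corresponds to a ring map $A \to B := \mathcal{O}_X(V)$ whose composition with $B \hookrightarrow k(X)$ is the inclusion $A \hookrightarrow k(X)$, forcing $A \to B$ to be the canonical inclusion and hence $g|_V$ to be the open immersion $V \hookrightarrow X$. In fact, you can skip the dense-open intermediate step entirely: $g^{\#} = \mathrm{id}_{k(X)}$ says the generic point $\eta \to X$ equalizes $g$ and $\mathrm{id}_X$, so $\eta$ lies in the closed equalizer $Z$, whence $|Z| = |X|$ and $Z = X$ by reducedness.
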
   
    \begin{proof}  The proof is straightforward. Indeed, we may and do assume that $n>m$ and write $g:=f^{n-m}$.
   Let $k\geq 2$ be an integer such that $(k-1)n - km\geq 0$. Define $G:=\left(f^{n-m}\right)^{k-1} = f^{n(k-1)-m(k-1)}$. Let $P\in X$ be in the image of $G$. Let $Q$ be a point such that $G(Q) = P$. Then,
    \begin{eqnarray*}
 f^{n-m}(P) &= & f^{n-m} \circ \left(f^{n-m}\right)^{k-1}(Q) = f^{nk-mk} (Q) = f^{n(k-1)-mk} \circ f^n (Q)  \\
 &=& f^{n(k-1) - mk} (f^m(Q)) = f^{n(k-1)-m(k-1)}(Q) = \left( f^{n-m}\right)^{k-1}(Q) \\ & = & G(Q) =  P.
 \end{eqnarray*} Thus, $P$ is a fixed point of $f^{n-m}$. We now use this observation to show that $f^{n-m}  =\mathrm{id}_X$.
 
   Let $X^g := \{P\in X \ | \ g(P) = P\}$ be the fixed locus of $X$. Since $X$ is separated, $X^g$ is a closed subscheme of $X$. Indeed, $X^g$ is the intersection of $\Delta$ and the graph of $g$ in $X\times X$.
  However, since every point in the image of $G$ is a fixed point of $f^{n-m}$, we see that $X^g$ contains the image of $G$. Moreover, as $f$ is dominant, the morphism $G= \left(f^{n-m}\right)^{k-1}$ is dominant. Thus, the closed subscheme $X^g$ contains the dense subset $G(X)$ of $X$. Therefore, since $X^g$ is closed and dense, it follows that  $X^g =X$. We conclude that $g=\mathrm{id}_X$, as required.  
    \end{proof}
    

We now prove Theorem \ref{thm:ends} and Theorem \ref{thm1}.

 \begin{proof}[Proof of Theorem \ref{thm:ends}] Let $X$ be an arithmetically hyperbolic variety over $k$. We first show that every dominant endomorphism is an automorphism of finite order. Thus,    let $f:X\to X$ be a dominant endomorphism of $X$ over $k$.   Since $X$ is arithmetically hyperbolic, we see that  $f$ has finite orbits (Lemma \ref{lem:orbs_are_fin}).  Thus, it follows from Lemma \ref{lem:periodic_is_aut} that $f$ is an automorphism of finite order. This proves the second statement of the theorem.  Now,  as every automorphism of $X$ has finite order, the group $\Aut_k(X)$ is  a torsion group.  
 Let $\Gamma\subset \Aut_k(X)$ be a finitely generated subgroup. Then, as $\Aut_k(X)$ is torsion, the group $\Gamma$ is a finitely generated torsion subgroup of $\Aut_k(X)$. Therefore, by the theorem of Bass--Lubotzky \cite[Corollary~1.3]{Bass},  the group $\Gamma$ is finite. This shows that $\Aut_k(X)$ is a locally finite group, and  concludes the proof.
 \end{proof}

 \begin{corollary}\label{cor:auts}
 Let $k\subset L$ be an extension of algebraically closed fields of characteristic zero. Let $X$ be an arithmetically hyperbolic projective variety over $k$. Then $\Aut(X_L)$ is finite.
 \end{corollary}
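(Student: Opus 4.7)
The plan is to combine the results already established in the preceding sections, since Corollary \ref{cor:auts} is essentially a direct consequence of Theorem \ref{thm1} together with the base change compatibility recorded in Lemma \ref{lem:finiteness_of_aut}. Concretely, I would proceed in two short steps.

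First, I would apply Theorem \ref{thm1} to the projective arithmetically hyperbolic variety $X$ over $k$ to conclude that the group $\Aut_k(X)$ is finite. (Alternatively, one can argue via Theorem \ref{thm:ends}, which yields that $\Aut_k(X)$ is torsion, and then invoke Theorem \ref{thm:tor_is_fin} to promote torsion to finite; but since Theorem \ref{thm1} is already on record, this detour is unnecessary.)

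Second, I would invoke Lemma \ref{lem:finiteness_of_aut}, which says exactly that finiteness of the automorphism group of a projective variety over $k$ persists to any extension $L/k$ of algebraically closed fields. Applying this lemma to the finite group $\Aut_k(X)$ obtained in the first step, we deduce that $\Aut_L(X_L)$ is finite (and in fact equal to $\Aut_k(X)$).

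There is no real obstacle here: the corollary is a packaging of prior results. One could also phrase the final step as an application of Corollary \ref{cor:exis_of_aut0}, using that the (finite, hence torsion) group $\Aut_k(X)$ feeds directly into its hypothesis. The only thing worth being careful about is making sure the representability input behind Lemma \ref{lem:finiteness_of_aut} — namely that $\mathrm{Aut}_{X/k}$ is a locally finite type group scheme whose underlying $k$-points form $\Aut_k(X)$ — is available for projective $X$, which it is.
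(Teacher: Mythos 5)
Your ``alternative'' route is in fact the correct one, and it matches the paper's proof; your ``main'' route is circular within the paper's logical order. In the paper, Theorem~\ref{thm1} is \emph{not} proven before Corollary~\ref{cor:auts} --- it is derived \emph{from} it (the paper's proof of Theorem~\ref{thm1} reads ``This follows from Corollary~\ref{cor:auts} with $k=L$''). So asserting that ``Theorem~\ref{thm1} is already on record'' and using it as the first step would make the argument circular. What you need is precisely the chain you dismiss as an unnecessary detour: apply Theorem~\ref{thm:ends} to deduce that $\Aut_k(X)$ is torsion, then apply Theorem~\ref{thm:tor_is_fin} (projectivity is essential here) to upgrade torsion to finite, and finally apply Lemma~\ref{lem:finiteness_of_aut} to transfer finiteness to $\Aut_L(X_L)$. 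The paper packages the second and third steps as Corollary~\ref{cor:exis_of_aut0}, which is exactly what you suggest at the end, so the substance of your argument is sound --- you just have the dependency arrow between Theorem~\ref{thm1} and Corollary~\ref{cor:auts} pointing the wrong way.
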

 \begin{proof} Let $X$ be a projective arithmetically hyperbolic variety over $k$. Since $X$ is arithmetically hyperbolic over $k$, it follows that $\Aut_k(X)$ is a torsion group (Theorem \ref{thm:ends}). Thus,  it follows from
Corollary \ref{cor:exis_of_aut0} that $\Aut_L(X_L)$ is finite.
 \end{proof}

  \begin{proof}[Proof of Theorem \ref{thm1}]  This follows from Corollary \ref{cor:auts} (with $k=L$).
 \end{proof}

 \subsection{A remark on hyperk\"ahler varieties}\label{section:ends4}

 A smooth projective variety over $k$ is a \emph{hyperk\"ahler variety over $k$} if $\pi_1^{et}(X)$ is trivial (i.e., $X$ is algebraically simply connected),   and  $\mathrm{H}^{2,0}(X) :=  \mathrm{H}^0(X,\Omega^2_X)$ is a one-dimensional $k$-vector space which can be generated by a non-degenerate form.


 \begin{theorem}\label{thm:hypkah}
 Let $X$ be a  hyperk\"ahler variety over $k$. If $\rho(X)\geq 3$ and  $\Aut_k(X)$ is finite, then  $X$ admits a rational curve over $k$.
 \end{theorem}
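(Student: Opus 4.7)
The plan is to argue by contradiction: suppose that $X$ has no rational curve over $k$, and show that the hypothesis $\rho(X) \geq 3$ forces $\Aut_k(X)$ to be infinite. The strategy uses the structure theory of the movable/nef cone of a projective hyperkähler variety together with the Morrison--Kawamata cone conjecture, which is known for hyperkählers by work of Markman, Amerik--Verbitsky, and Markman--Yoshioka.

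First I would reduce to $k = \CC$: since $X$ is projective, we may spread out and descend to a finitely generated subfield $k_0 \subset k$, then choose an embedding $k_0 \hookrightarrow \CC$. A rational curve on $X_\CC$ descends to one on $X$ over $k$ (using that the Hilbert scheme of curves is defined over $k_0$), so we may assume $k=\CC$. Under this reduction, $X$ becomes a complex projective hyperkähler manifold in the classical sense with $\rho(X) \geq 3$.

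Next I would analyze the nef cone. For a projective hyperkähler variety, Boucksom's divisorial Zariski decomposition and Markman's description of wall divisors show that the boundary of the birational nef cone inside the positive cone $\mathcal{C}^+ \subset \NS(X)_\RR$ is determined by classes dual to \emph{prime exceptional} (equivalently, \emph{wall}) divisors, and every such divisor is uniruled; in particular it contains rational curves. Under our assumption that $X$ has no rational curve over $k$, there are therefore no wall divisors, and consequently $\overline{\mathrm{Nef}}(X) = \overline{\mathcal{C}^+}$: the nef cone coincides with the (closure of the) positive cone.

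Now I would invoke the cone conjecture: the natural action of $\Aut_k(X)$ on $\overline{\mathrm{Nef}}(X) \cap \NS(X)_\RR$ admits a rational polyhedral fundamental domain. By the signature theorem (Hodge index), $\NS(X)$ has signature $(1,\rho-1)$, so the positive cone $\mathcal{C}^+$ modulo scaling is the real hyperbolic space $\mathbb{H}^{\rho-1}$; since $\rho - 1 \geq 2$, no finite group of isometries can have a polyhedral fundamental domain in $\mathcal{C}^+$ (the quotient would be a rational polytope, yet $\mathcal{C}^+$ is round). Combined with the fact that the kernel of $\Aut_k(X) \to O(\NS(X))$ is finite (it lies in the Lieberman--Deligne linear part, which is trivial since $X$ is hyperkähler and hence has no nontrivial connected automorphism group), this forces $\Aut_k(X)$ to be infinite, contradicting our hypothesis.

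The main obstacle is the second step: precisely linking the absence of rational curves on $X$ to the statement that the nef cone exhausts the positive cone. This requires the result that every prime exceptional (wall) divisor on a projective hyperkähler variety is uniruled, which is due to Markman (building on Druel and Boucksom) and is the only non-formal input. Once this is granted, the remaining arguments are standard consequences of the cone conjecture for hyperkähler manifolds and the signature of the Néron--Severi lattice.
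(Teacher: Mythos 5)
Your proof is essentially correct and lands on the same two key facts the paper leans on via Kamenova--Verbitsky: (i) the walls of the nef cone of a projective hyperk\"ahler manifold are dual to (classes represented by) rational curves, so absence of rational curves forces $\overline{\mathrm{Nef}}(X)$ to fill up the positive cone $\overline{\mathcal{C}^+}$, and (ii) when $\rho\geq 3$ that forces $\Aut(X)$ to be infinite. The paper takes the shortcut of reducing to $k=\CC$ and citing the proof of \cite[Theorem~3.7]{KV} (together with \cite[Lemma~3.6]{KV}, which is exactly the ``a wall of the K\"ahler cone inside the positive cone produces a rational curve'' step, resting on Boucksom and Huybrechts). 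Your route replaces the last step of Kamenova--Verbitsky by an appeal to the Morrison--Kawamata cone conjecture for hyperk\"ahler varieties: if $\Aut(X)$ were finite, a rational polyhedral fundamental domain would make $\overline{\mathrm{Nef}}(X)=\overline{\mathcal{C}^+}$ a finite union of polyhedral cones, incompatible with the round quadric boundary when $\rho-1\geq 2$. This is a clean alternative, though it imports a heavier theorem than the paper strictly needs; the Kamenova--Verbitsky argument the paper cites reaches the conclusion more directly from the structure of the positive cone and the action of the monodromy/isometry group without invoking the full cone conjecture. One small point worth being careful about in your reduction to $\CC$: what you really use is that ``contains a rational curve'' is insensitive to extension of algebraically closed fields of characteristic zero (Hom schemes are of finite type and commute with base change), rather than a literal descent of a specific curve; and your remark that $\ker(\Aut(X)\to \mathrm{O}(\NS(X)))$ is finite is correct because it lies in $\Aut_{[L]}(X)$ for an ample $L$, which is a finite extension of the trivial group $\Aut^0(X)$ for a hyperk\"ahler.
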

\begin{proof}    Without loss of generality, we may and do assume that $k=\CC$. Now, 
 the statement of the theorem  is shown by Kamenova--Verbitsky in the proof of  \cite[Theorem~3.7]{KV}, and relies on earlier work of Boucksom and Huybrechts; see \cite{Boucksom, Huybr}. Indeed, under our assumptions, the K\"ahler  cone does not  coincide with the positive cone, so that the result follows from \cite[Lemma~3.6]{KV}.
\end{proof}

 \begin{theorem}\label{thm:hyp_kah_intro}
 Let $X$ be a smooth projective hyperk\"ahler variety over $k$. If $\rho(X) \geq 3$, then $X$ is not arithmetically hyperbolic over $k$.
 \end{theorem}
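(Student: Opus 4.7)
The plan is to argue by contradiction, combining the preceding result on rational curves on hyperkähler varieties (Theorem \ref{thm:hypkah}) with the automorphism finiteness result for arithmetically hyperbolic projective varieties (Theorem \ref{thm1}) and the fact that arithmetic hyperbolicity forces grouplessness (Proposition \ref{prop:ar_is_gr}). So I would assume, toward a contradiction, that $X$ is a smooth projective hyperkähler variety over $k$ with $\rho(X)\geq 3$ which is arithmetically hyperbolic.

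First, since $X$ is a projective arithmetically hyperbolic variety over $k$, Theorem \ref{thm1} applies and gives that $\Aut_k(X)$ is finite. Now both of the hypotheses of Theorem \ref{thm:hypkah} are satisfied (namely $\rho(X)\geq 3$ and $\Aut_k(X)$ finite), so we conclude that $X$ admits a rational curve over $k$, i.e.\ a non-constant morphism $f\colon \mathbb{P}^1_k\to X$.

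Next, I would derive a contradiction from the existence of this rational curve by invoking grouplessness. By Proposition \ref{prop:ar_is_gr}, the arithmetically hyperbolic variety $X$ is groupless over $k$; in particular, by Lemma \ref{lem:groupless}(1), every morphism $\mathbb{G}_{m,k}\to X$ is constant. However, the restriction of the non-constant morphism $f\colon \mathbb{P}^1_k\to X$ to the open subscheme $\mathbb{G}_{m,k}=\mathbb{P}^1_k\setminus\{0,\infty\}$ is still non-constant, because its image is dense in the one-dimensional image of $f$. This contradicts grouplessness, so the assumption that $X$ is arithmetically hyperbolic must fail, as required.

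The argument is essentially a short composition of results already proved earlier in the paper, so I do not anticipate any real obstacle; all the genuine work has been done in Theorem \ref{thm:hypkah} (which rests on the Kamenova--Verbitsky/Boucksom/Huybrechts theory of the Kähler cone versus the positive cone) and in Theorem \ref{thm1}. The only point that requires a brief verification is the passage from a rational curve to a non-constant map out of $\mathbb{G}_{m,k}$, but this is immediate since removing two points from $\mathbb{P}^1_k$ cannot make a non-constant morphism to a separated scheme become constant.
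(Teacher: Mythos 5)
Your proof is correct and uses the same three ingredients as the paper (Theorem \ref{thm1}, Theorem \ref{thm:hypkah}, Proposition \ref{prop:ar_is_gr}); the paper phrases it as a dichotomy on whether $\Aut_k(X)$ is finite while you phrase it as a contradiction, but these are the same argument. Your explicit restriction of the rational curve to $\mathbb{G}_{m,k}$ to contradict grouplessness is a harmless elaboration of a step the paper leaves implicit.
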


 \begin{proof} Let $X$ be a hyperk\"ahler variety  over $k$ with Picard rank $\rho(X)$ at least three. If $\Aut_k(X)$ is infinite, it follows from our main result (Theorem   \ref{thm1}) that $X$ is not arithmetically hyperbolic over $k$. If $\Aut_k(X)$ is finite, then  $X$ has a rational curve   by Theorem \ref{thm:hypkah}, and is therefore not  arithmetically hyperbolic (Lemma \ref{prop:ar_is_gr}).  
 \end{proof}

 \bibliography{refsci}{}
\bibliographystyle{plain}

\end{document}